\newtheorem{thm}{Theorem}[section]
\newtheorem{lemma}[thm]{Lemma}
\newtheorem{cor}[thm]{Corollary}
\newtheorem{prop}[thm]{Proposition}
\theoremstyle{definition}
\newtheorem{definition}[thm]{Definition}
\newtheorem{example}[thm]{Example}
\newtheorem{prob}[thm]{Problem}
\theoremstyle{remark}
\newtheorem{rmk}[thm]{Remark}
\newtheorem*{notation}{Notation}
\numberwithin{equation}{section}
\newcommand{\Z}{\mathbb{Z}}
\DeclareMathOperator{\id}{id}
\newcommand{\inv}{^{-1}}
\DeclareMathOperator{\RMult}{RMult}
\newcommand{\R}{\mathcal{R}}
\renewcommand{\L}{\mathcal{L}}
\newcommand{\F}{\mathcal{F}}
\renewcommand{\phi}{\varphi}
\newcommand{\bij}{\xrightarrow{\sim}}
\newcommand{\Grp}{\mathsf{Grp}}
\newcommand{\Ris}{\mathsf{Ris}}
\newcommand{\Risf}{\Ris_{\mathrm{fai}}}
\newcommand{\RQ}{\mathsf{RQuas}}
\newcommand{\tr}{\triangleright}
\begin{document}
	
	\title[
    Groups versus quandle-like invariants of 3-manifolds 
    ]{
    Groups versus quandle-like invariants of 3-manifolds
    }
	\author{L\d\uhorn c Ta}	
	
	\address{Department of Mathematics, University of Pittsburgh, Pittsburgh, Pennsylvania 15260}
	\email{ldt37@pitt.edu}
	
	\subjclass[2020]{Primary 20N02, 57K31; Secondary 20J15, 57K12}
	
	\keywords{3-manifold invariant, categorification, equivalence of categories, group, quandle, right quasigroup}
	
	\begin{abstract}
	Risandles are nonassociative algebraic structures recently introduced to construct invariants of $3$-manifolds. In this note, we show that the categories of groups and nonempty, faithful risandles are equivalent. In analogy to knot quandles, we also introduce \emph{fundamental risandles} of $3$-manifolds, which categorify the risandle coloring invariants of Ishii, Nakamura, and Saito \cite{risandle}. For infinitely many $3$-manifolds, the equivalence of categories recovers the fundamental group from the fundamental risandle and vice versa.
	\end{abstract}
	\maketitle
	
\section{Introduction}
In 2024, Ishii, Nakamura, and Saito \cite{risandle} introduced nonassociative algebraic structures called \emph{risandles} to construct coloring invariants of smooth, closed, connected, oriented 3-manifolds. The inspiration for risandles came from similar algebraic structures called \emph{quandles}, which Joyce \cite{joyce} and Matveev \cite{matveev} independently introduced in 1982 to construct complete knot invariants. 

Risandles are one of several kinds of quandle-like algebraic structures used to study invariants of 3-manifolds; see, for example, \citelist{\cite{fenn}\cite{nosaka}\cite{skew}\cite{hat}\cite{volume}\cite{dw}}. The classification problems for these structures are important for computing coloring invariants and cocycle invariants of knots and $3$-manifolds \cite{book}.

To address the classification problem for risandles, we prove the following. Let $\Grp$ and $\Risf$ denote the categories of groups and nonempty, faithful risandles, respectively.

\begin{thm}\label{thm:main}
    The functor $\R\colon \Grp\bij\Risf$ sending each group $G$ to the risandle $\R(G)\coloneq (G,\tr)$ with operation $h\tr g\coloneq hg\inv$ is an equivalence of categories. The inverse functor $\L\colon \Risf\bij\Grp$ sends each nonempty, faithful risandle $X$ to its right multiplication group $\RMult (X)$.
\end{thm}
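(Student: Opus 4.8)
The plan is to prove the theorem by exhibiting explicit natural isomorphisms $\epsilon\colon\L\R\Rightarrow\id_{\Grp}$ and $\eta\colon\id_{\Risf}\Rightarrow\R\L$, so that $\R$ and $\L$ form a pair of quasi-inverse equivalences. First I would check that $\R$ is well defined. Verifying that $\R(G)=(G,\tr)$ with $h\tr g\coloneq hg\inv$ satisfies each risandle axiom is a routine group-theoretic computation: right invertibility is witnessed by $R_g\inv=R_{g\inv}$, and faithfulness holds because $g\mapsto R_g$ is precisely the right regular representation of $G$, which is injective. On morphisms I would set $\R(f)\coloneq f$; since $f(hg\inv)=f(h)f(g)\inv$, every group homomorphism automatically respects $\tr$, and functoriality is immediate. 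For $\L$, each $\RMult(X)\le\mathrm{Sym}(X)$ is a group by construction, and a risandle homomorphism $\phi\colon X\to Y$ intertwines right translations via $\phi\circ R_x=R_{\phi(x)}\circ\phi$; I would use this identity to define $\L(\phi)$ by $R_x\mapsto R_{\phi(x)}$, which becomes an unambiguous group homomorphism once the structural claim below guarantees that every element of $\RMult(X)$ is some $R_x$ and that faithfulness forces $R_x=R_{x'}\Rightarrow x=x'$.

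The technical core, which I expect to be the main obstacle, is the following: for a nonempty faithful risandle $X$, the right translations already form a group, i.e.\ $\RMult(X)=\{R_x:x\in X\}$, and the map $\eta_X\colon x\mapsto R_x$ is an isomorphism of risandles $X\bij\R(\RMult(X))$. Injectivity of $\eta_X$ is exactly faithfulness; that $\eta_X$ is a risandle homomorphism amounts to the relation $R_{x\tr y}=R_x R_y\inv$ in $\mathrm{Sym}(X)$, which I would derive from the risandle axioms. The delicate point is surjectivity: I must show that $\{R_x:x\in X\}$ is closed under composition and inversion, so that it coincides with the subgroup it generates. This is where the full strength of the risandle axioms must be used, and establishing the genuine closure relation $R_xR_y\inv\in\{R_z:z\in X\}$ — rather than merely landing in the generated group — is the crux of the argument. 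Nonemptiness is needed precisely here, since the empty risandle would produce the trivial group, whose image under $\R$ is a one-point risandle rather than the empty one.

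On the group side, the counit $\epsilon_G\colon G\to\L\R(G)=\RMult(\R(G))$, $g\mapsto R_g$, is a group isomorphism: it is a homomorphism and surjective because $R_g\circ R_f=R_{gf}$ shows $\{R_g:g\in G\}$ is already closed, hence exhausts $\RMult(\R(G))$, and it is injective by faithfulness of $\R(G)$. I would then verify naturality of $\epsilon$ and $\eta$. For $\epsilon$ this reduces to the square $\L\R(f)\circ\epsilon_G=\epsilon_H\circ f$, which holds by the definitions $\L\R(f)(R_g)=R_{f(g)}=\epsilon_H(f(g))$; for $\eta$ the analogous square commutes because $\phi$ intertwines right translations. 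Having both composites naturally isomorphic to the identity functors completes the proof that $\R$ is an equivalence with quasi-inverse $\L$. Equivalently, these same two facts exhibit $\R$ as fully faithful (via $\epsilon$) and essentially surjective (via $\eta$), which I could use as an alternative packaging of the argument.
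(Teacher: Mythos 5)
Your plan follows essentially the same route as the paper. The paper proves that $\R$ is fully faithful (Proposition \ref{prop:ff}, by checking that any risandle homomorphism $\R(G)\to\R(K)$ is automatically a group homomorphism) and essentially surjective via exactly the structural claim you isolate: for a nonempty risandle the set of right translations $s(X)=\{R_x : x\in X\}$ is already a subgroup of $S_X$, hence equals $\RMult(X)$ (Lemma \ref{lem:sets}), and faithfulness then makes $x\mapsto R_x$ a risandle isomorphism $X\bij\R(\RMult(X))$ (Proposition \ref{prop:bij}, resting on the fact that $s$ is a homomorphism into $\R(S_X)$, i.e.\ $R_{x\tr y}=R_xR_y\inv$, which is Corollary \ref{cor:alt}). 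Your unit/counit packaging and the paper's ``fully faithful plus essentially surjective'' packaging are interchangeable, as you note yourself, and your observation about where nonemptiness enters is correct.

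The one thing you have not actually done is the step you yourself call the crux: showing that $\{R_x : x\in X\}$ is closed under composition and inversion. Flagging it as ``where the full strength of the risandle axioms must be used'' is not a proof, and everything downstream (well-definedness and functoriality of $\L(\phi)\colon R_x\mapsto R_{\phi(x)}$, surjectivity of $\epsilon_G$, both naturality squares) leans on it. For the record, the paper's argument is short: writing $s_x$ for your $R_x$, the identity lies in $s(X)$ because $s_{x\tr x}=\id_X$ (Proposition \ref{prop:id1}); closure under inversion is $s_x\inv=s_{s_x(x\tr x)}$, obtained from the risack axiom \eqref{eq:axiom2} with $z\coloneq x\tr x$; and closure under composition follows from \eqref{eq:axiom2} with $z\coloneq s_x\inv(y)$, which gives $s_ys_x=s_{s_x(z)}s_x=s_z$. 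Until some such computation is supplied, your proof is incomplete at its central point. A minor further remark: the paper's $\L$ is defined on all of $\Ris$ by a presentation, as a left adjoint to $\R$, and the isomorphism $\L(X)\cong\RMult(X)$ for nonempty faithful $X$ is then deduced from the equivalence; your direct construction of $\RMult$ as a quasi-inverse proves the theorem as stated but does not address that identification.
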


In analogy with knot quandles, we also introduce \emph{fundamental risandles} $\F(M)$ of smooth, closed, connected, oriented $3$-manifolds $M$. Fundamental risandles categorify the risandle coloring invariants introduced in \cite{risandle} via \eqref{eq:cat}, and they also satisfy the following.

\begin{thm}[Theorem \ref{thm:invariant}]\label{thm:inv0}
    The fundamental risandle is an invariant of 3-manifolds up to orientation-preserving diffeomorphism. 
\end{thm}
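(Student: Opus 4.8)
The plan is to reduce the diffeomorphism invariance of $\F(M)$ to a finite checklist of local moves on a combinatorial presentation of $M$, in exact analogy with the proof that the knot quandle is a knot invariant. First I would fix a diagrammatic description of smooth, closed, connected, oriented $3$-manifolds — say a Heegaard diagram $D$, though a framed surgery diagram would serve equally well — and use that $\F(M)$ is defined as the risandle presented by one generator for each relevant combinatorial feature of $D$ and one relation for each crossing or attachment, so that $\operatorname{Hom}(\F(M),X)$ is exactly the set of $X$-colorings computed by the risandle coloring invariants of \cite{risandle}. Writing $\F(D)$ for this presented risandle, the theorem becomes the assertion that $\F(D)\bij\F(D')$ as risandles whenever $D$ and $D'$ describe orientation-preserving diffeomorphic manifolds.

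Next I would invoke the relevant completeness theorem for the chosen presentation — the Reidemeister--Singer theorem for Heegaard diagrams (isotopy, handle slides, and (de)stabilization) or Kirby calculus for surgery diagrams — which guarantees that any two such diagrams of the same oriented $3$-manifold are connected by a finite sequence of elementary, orientation-preserving local moves. This reduces the whole statement to a purely local one: for each elementary move $D\rightsquigarrow D'$, exhibit an isomorphism $\F(D)\bij\F(D')$. For the moves that merely isotope the diagram this is immediate, since the generators and relations are unchanged up to relabeling, and for (de)stabilization it is a routine Tietze move, adjoining a generator together with a single relation that defines it. Throughout, the chirality of the operation $h\tr g = hg\inv$ is precisely what forces the restriction to orientation-preserving moves: an orientation reversal would replace $\tr$ by its opposite and need not preserve the isomorphism type, which is the conceptual reason the word ``orientation-preserving'' cannot be dropped.

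I expect the main obstacle to be the handle-slide move (and its surgery analogue, a band slide along a framed component), which plays the role of the Reidemeister III move in the knot-quandle argument: here the relations are genuinely modified rather than merely adjoined, and one must show that the slid relations are equivalent to the original ones modulo the risandle axioms, so that the evident generator-level map is a well-defined homomorphism with a well-defined inverse. This is the step where the specific identities defining a risandle — as opposed to those of an arbitrary object of $\RQ$ — are used in an essential way, and I anticipate it requiring a careful hands-on computation with the defining identities, likely mirroring the computation that underlies the equivalence of Theorem \ref{thm:main}. Once every move is verified, concatenating the local isomorphisms along a connecting sequence of moves yields $\F(M)\bij\F(M')$ for orientation-preserving diffeomorphic $M,M'$, which completes the proof.
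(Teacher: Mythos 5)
Your overall architecture --- fix a combinatorial presentation of $M$, invoke a completeness theorem reducing diffeomorphism to a finite list of local moves, and check invariance move by move --- is exactly the shape of the paper's argument, but the combinatorial calculus you invoke is the wrong one, and this is a genuine gap rather than a stylistic difference. The fundamental risandle $\F(M)$ is not presented by a Heegaard diagram or a framed surgery diagram; it is presented by the \emph{virtual knot diagram} that Ishii, Nakamura, and Saito associate to a flow-spine of $M$ (one generator per arc, one relation per real crossing, as in Figure \ref{fig:crossings}). Consequently the Reidemeister--Singer theorem and Kirby calculus are not applicable here: neither describes how the arcs and real crossings of that virtual knot diagram change, so the ``finite checklist of local moves'' you propose to verify is never expressed in terms of the actual generators and relations of $\F(M)$. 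The correct completeness theorem is \cite{risandle}*{Thm.\ 4.2}: orientation-preservingly diffeomorphic $3$-manifolds have RIS-equivalent virtual knot diagrams, i.e., diagrams related by planar isotopy together with R2-moves, I-moves, and S-moves.

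Once that substitution is made, the rest of your plan goes through, and the hard local verification you anticipate has essentially already been carried out in the literature: \cite{risandle}*{Thm.\ 5.2} shows that colorings (equivalently, by \eqref{eq:cat}, homomorphisms out of the presented risandle) are preserved under the R2-move by bijectivity of the right translations, under the I-move by the risack axiom \eqref{eq:axiom}, and under the S-move by \eqref{eq:axiom} together with \eqref{eq:red}; the same computations show that the presented risandles themselves are isomorphic, which is how the paper concludes. Two smaller points: the role of ``the move where the risandle identities are used essentially'' is played by the I- and S-moves, which are not analogues of handle slides or of Reidemeister III (see Remark \ref{rmk:reid0}); and your heuristic that an orientation reversal ``replaces $\tr$ by its opposite'' is not something the flow-spine formalism supports without further argument --- the orientation hypothesis enters only through the hypotheses of \cite{risandle}*{Thm.\ 4.2}.
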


\begin{thm}[Theorem \ref{thm:2}]\label{thm:3}
    There exist infinitely
    many 3-manifolds $M$ such that the fundamental risandle $\F(M)$ and the fundamental group $\pi_1(M)$ recover one another via $\L$ and $\R$.
    \end{thm}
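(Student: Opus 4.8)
The plan is to exploit the fact that, by Theorem~\ref{thm:main}, $\L$ and $\R$ are mutually inverse equivalences, so that the two halves of the recovery statement collapse into one. Concretely, if for a given $M$ the fundamental risandle $\F(M)$ is nonempty and faithful and there is an isomorphism $\F(M)\cong\R(\pi_1(M))$ in $\Risf$, then applying $\L$ yields $\L(\F(M))\cong\L(\R(\pi_1(M)))\cong\pi_1(M)$, while $\R(\pi_1(M))\cong\F(M)$ is immediate; conversely $\L(\F(M))\cong\pi_1(M)$ already forces $\F(M)\cong\R(\L(\F(M)))\cong\R(\pi_1(M))$, since an equivalence reflects isomorphisms. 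Thus it suffices to produce infinitely many $M$ for which $\F(M)$ is a faithful risandle isomorphic to $\R(\pi_1(M))$.

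First I would construct, for every $M$, a natural risandle homomorphism $\eta_M\colon\F(M)\to\R(\pi_1(M))$ and show it is surjective. This should follow from the construction of $\F(M)$: its generators are meridian-type elements carrying canonical classes in $\pi_1(M)$, and sending each generator to its class defines $\eta_M$, which is a genuine risandle map because the operation $h\tr g=hg\inv$ on $\R(\pi_1(M))$ respects the defining relations read off from $M$. Surjectivity holds because these classes generate $\pi_1(M)$. Note that once $\eta_M$ is known to be an isomorphism for a given $M$, faithfulness of $\F(M)$ is automatic, as $\R(\pi_1(M))$ is faithful.

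Next I would restrict to an explicit infinite family — for instance lens spaces $L(p,q)$, whose genus-one Heegaard diagrams make $\F(M)$ directly computable — and compute $\F(M)$ from the diagram-based presentation underlying the risandle coloring invariant of~\cite{risandle}. For such $M$ the presentation is small enough to exhibit a two-sided inverse of $\eta_M$: equivalently, to verify that the risandle relations extracted from the Heegaard diagram impose exactly the group relations of $\pi_1(M)$ and no more, so that $\RMult(\F(M))\cong\pi_1(M)$. Establishing injectivity of $\eta_M$ on this family is the crux, as it amounts to proving that the fundamental risandle of these manifolds carries no elements or identifications beyond those forced by $\pi_1(M)$.

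The main obstacle is precisely this injectivity. For a general $3$-manifold, $\F(M)$ is expected to be strictly finer than $\R(\pi_1(M))$ — just as a knot quandle is generally finer than the conjugation quandle of its knot group — so the family must be chosen so that the combinatorics of its Heegaard diagram collapses $\F(M)$ onto the group risandle. I would therefore concentrate the effort on verifying, for the chosen family, that $\eta_M$ admits a two-sided inverse, after which Theorem~\ref{thm:main} delivers the recovery of $\pi_1(M)$ from $\F(M)$ and of $\F(M)$ from $\pi_1(M)$ for infinitely many $M$.
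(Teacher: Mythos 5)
Your overall strategy --- reduce the two-way recovery statement to a single isomorphism via Theorem~\ref{thm:main}, then verify that isomorphism on the family of lens spaces --- is the same as the paper's (your reduction is exactly Proposition~\ref{prop:faithful}, and the paper's family is $L(n,1)$ for $n\in\Z^+$). But there is a concrete gap in the mechanism you propose for the verification. You assert that for \emph{every} $3$-manifold $M$ there is a natural surjective risandle homomorphism $\eta_M\colon\F(M)\to\R(\pi_1(M))$ because the generators of $\F(M)$ are ``meridian-type elements carrying canonical classes in $\pi_1(M)$.'' This is unjustified: the generators of $\F(M)$ are arcs of a virtual knot diagram produced from a flow-spine of $M$, not meridians of a link in $M$, and the crossing relations are of the form $x_i\tr x_j=x_k$, i.e.\ $\eta(x_i)\eta(x_j)\inv=\eta(x_k)$ in $\pi_1(M)$ --- not the conjugation relations that meridians of knot groups satisfy. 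Whether any comparison between $\L(\F(M))$ (equivalently $\RMult(\F(M))$ in the faithful case) and $\pi_1(M)$ exists in general is precisely what the paper leaves open in Problems~\ref{conj1} and~\ref{conj2}, so you should not take the existence of $\eta_M$ for granted. Moreover, the step you correctly identify as the crux --- injectivity on the chosen family --- is exactly the step your proposal defers rather than carries out.

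The paper avoids any natural map entirely and argues by brute force on $L(n,1)$: from the explicit virtual knot diagram (Figure~\ref{fig:lens}) one reads off the presentation $\F(M)\cong\langle x\mid x=s_x^n(x)\rangle$, which by comparison with the free risandle on one generator has exactly $n$ elements $s_x^k(x)$, $0\le k\le n-1$. Lemma~\ref{lem:sets} gives $\RMult(\F(M))=s(\F(M))$, so $|{\RMult(\F(M))}|\le n$; since $s_x$ has order $n$, $\RMult(\F(M))\cong\Z/n\Z$ (Proposition~\ref{prop:cyclic}), and counting forces $s$ to be injective, i.e.\ $\F(M)$ is faithful (Corollary~\ref{cor:faithful}). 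Then condition~\ref{A3} of Proposition~\ref{prop:faithful} holds because $\pi_1(L(n,1))\cong\Z/n\Z$, and the equivalence of categories delivers \eqref{eq:conj1} and \eqref{eq:conj2} with no comparison map needed --- only an abstract isomorphism of finite cyclic groups. If you replace your $\eta_M$ scaffolding with this direct computation (and note that Lemma~\ref{lem:sets} is what bounds $\RMult$ and yields faithfulness for free), your argument closes.
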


\subsection{Structure of the paper}
In Section \ref{sec:prelims}, we recall several definitions and notational conventions from the theories of right quasigroups and quandles.

In Section \ref{sec:ris}, we define the category $\Ris$ of risandles, and we temporarily introduce algebraic structures called \emph{risacks} having one fewer axiom than risandles. We also discuss the functors $\L\dashv\R$ appearing in Theorem \ref{thm:main} as adjoint functors between $\Grp$ and $\Ris$.

In Section \ref{sec:props}, we record several algebraic properties of risandles. As applications, we show that all risacks are risandles and provide coordinate-free characterizations of risandles and faithful risandles.

In Section \ref{sec:pf}, we use the results of Section \ref{sec:props} to prove Theorem \ref{thm:main}.

In Section \ref{sec:free}, we discuss \emph{free risandles} $\langle X\rangle$ and quotients of risandles by congruence relations. These notions come from universal algebra, and we will need them to define fundamental risandles.

In Section \ref{sec:fund}, we introduce fundamental risandles $\F(M)$ of $3$-manifolds $M$ and prove Theorem \ref{thm:inv0} (Theorem \ref{thm:invariant}). We also explain how fundamental risandles categorify the risandle coloring invariants introduced in \cite{risandle}; see \eqref{eq:cat}.

In Section \ref{sec:lens}, we study the fundamental risandle $\F(M)$ of the lens space $M=L(n,1)$, where $n\in\Z^+$ is a positive integer.

In Section \ref{sec:end}, we pose two open questions about the relationship between fundamental risandles $\F(M)$ and fundamental groups $\pi_1(M)$. Finally, we prove Theorem \ref{thm:3} (Theorem \ref{thm:2}).

\begin{notation}
    We use the following notation throughout this paper. Denote the composition of functions $\phi\colon X\to Y$ and $\psi\colon Y\to Z$ by $\psi\phi$. Given a set $X$, let $S_X$ denote the symmetric group of $X$, and let $\id_X\in S_X$ denote the identity map of $X$. Let $\Z^+$ denote the set of positive integers.
\end{notation}

\section{Right quasigroups}\label{sec:prelims}

Following \citelist{\cite{taGQ}\cite{bonatto}\cite{quasi}} in analogy with quandles, we will define risandles as a special class of \emph{right quasigroups} (that is, magmas such that all right multiplication maps are permutations).

\begin{definition}[\citelist{\cite{bonatto}\cite{quasi}}]\label{def:rq}
     A \emph{right quasigroup} is a pair $(X,s)$ where $X$ is a set and $s\colon X\to S_X$ is a function from $X$ to its symmetric group. For all $x\in X$, we call the map
     \[
     s_x\coloneq s(x)
     \]
     a \emph{right multiplication map} or \emph{right translation}. If $s_x=\id_X$, then we call $x$ a \emph{right identity element} of $X$.  
     Finally, if $s$ is injective, then we say that $(X,s)$ is \emph{faithful}. 
\end{definition}

Equivalently, a right quasigroup is a pair $(X,\tr_X)$ where $X$ is a set and $\tr_X\colon X\times X\to X$ is a binary operation such that, for all $x\in X$, the function $y\mapsto y\tr_X x$ is a permutation of $X$. We will omit the subscript from $\tr_X$ when it is clear from the context.\footnote{This paper borrows the notation $s_x(y)$ and $\tr$ from quandle theory to hint at the analogies between quandles and risandles; cf.\ \cite{risandle}. In particular, neither quandle operations nor risandle operations $\tr$ are associative in general.}

The equivalence of this definition with Definition \ref{def:rq} is given by the formula
\[
s_x(y)=y\tr x.
\]
Each of these two definitions has advantages over the other; this paper uses them interchangeably.

\begin{definition}[\citelist{\cite{bonatto}\cite{quasi}}]
    The \emph{right multiplication group} of a right quasigroup $(X,s)$, denoted by $\RMult(X)$, is the subgroup of $S_X$ generated by the set $s(X)$ of right multiplication maps:
    \[
    \RMult(X)\coloneq \langle s_x\mid x\in X\rangle\leq S_X.
    \]
\end{definition}

\begin{definition}[\citelist{\cite{bonatto}\cite{quasi}}]
    A \emph{homomorphism} of right quasigroups, say from $(X,\tr_X)$ to $(Y,\tr_Y)$, is a function $\phi\colon X\to Y$ that preserves the binary operations of $X$ and $Y$; that is,
    \[
    \phi(w\tr_X x)=\phi(w)\tr_Y \phi(x)
    \]
    for all $w,x\in X$. Let $\RQ$ denote the category of right quasigroups and their homomorphisms. Then an \emph{isomorphism} in $\RQ$ is simply a bijective homomorphism.
\end{definition}

\begin{example}
    The (right) regular action of a group $G$ on itself, given by $h\tr g\coloneq hg$, defines a right quasigroup $(G,\tr)$. Denote the category of groups by $\Grp$. Under this view, it is easy to see that $\Grp$ embeds faithfully but not fully into $\RQ$.
\end{example}

\begin{example}[\citelist{\cite{fenn}\cite{joyce}\cite{matveev}\cite{quasi}}]
    A \emph{rack} is a right quasigroup such that all right multiplication maps $s_x$ are endomorphisms. A \emph{quandle} is a rack $(X,\tr)$ such that $x\tr x=x$ for all $x\in X$. The right multiplication group of a rack is often called its \emph{inner automorphism group} or \emph{operator group}.
\end{example}

\section{Risandles}\label{sec:ris}

In this section, we study the algebraic properties of \emph{risandles}, which Ishii, Nakamura, and Saito introduced in  \cite{risandle} with inspiration from quandle theory. 

\subsection{Risacks and risandles}
Analogizing the relationship between racks and quandles (see \cite{fenn}), we temporarily introduce \emph{risacks}, which have one fewer axiom than risandles. Like racks and risandles, risacks have a topological motivation; see \cite{risandle}*{Rem.\ 6.7}. However, we later show that risacks are the same as risandles (Proposition \ref{prop:red}).

\begin{definition}[Cf.\ \cite{risandle}]
    Let $(X,\tr)$ be a right quasigroup. We say that $(X,\tr)$ is a \emph{risack} if
    \begin{equation}\label{eq:axiom}
        y\tr z=(y\tr x)\tr(z\tr x)
    \end{equation}
    for all $x,y,z\in X$. Equivalently, \begin{equation}\label{eq:axiom2}
        s_z=s_{s_x(z)}s_x
    \end{equation}
    for all $x,z\in X$.
\end{definition}

\begin{definition}[\cite{risandle}]\label{def}
    Let $(X,\tr)$ be a risack. We say that $(X,\tr)$ is a \emph{risandle} if
    \begin{equation}\label{eq:red}
        x=(x\tr x)\tr((x\tr x)\tr x)
    \end{equation}
    for all $x\in X$. 
    Let $\Ris$ be the full subcategory of $\RQ$ whose objects are risandles; a \emph{risandle homomorphism} (resp.\ \emph{isomorphism}) is just a morphism (resp.\ bijective morphism) in $\Ris$. Let $\Risf$ be the full subcategory of $\Ris$ whose objects are nonempty and faithful.
\end{definition}

\begin{rmk}
    In Proposition \ref{prop:red}, we show that all risacks are risandles, making axiom \eqref{eq:red} redundant. Moreover, Corollary \ref{cor:alt} provides a coordinate-free characterization of risandles.
\end{rmk}

\begin{example}[\cite{risandle}*{Prop.\ 5.1}]\label{ex:r}
    Let $G$ be a group, and define a binary operation $\tr\colon G\times G\to G$ by
    \[
    h\tr g\coloneq hg\inv.
    \]
    Then the pair $\R(G)\coloneq (G,\tr)$ is a faithful risandle.
\end{example}

\begin{example}[Cf.\ \cite{book}]
    There exist infinitely many unfaithful risandles. For example, let $X$ be any set, and define $s_x\coloneq \id_X$ for all $x\in X$. Then $(X,s)$ is a quandle called a \emph{trivial quandle}. Clearly, $(X,s)$ is also a risandle. (In fact, a converse to this statement holds; see Proposition \ref{prop:rack}.) If $|X|\geq 2$, then $(X,s)$ is unfaithful. Moreover, for all nontrivial risandles $(Y,t)$, the \emph{product risandle} $(X\times Y,s\times t)$ (that is, the categorical product in $\Ris$) is nontrivial and unfaithful.
\end{example}

\subsection{The functors $\R$ and $\L$}
It is easy to see that the assignment $\R$ from Example \ref{ex:r} defines a faithful functor $\R\colon \Grp\to \Risf$. Expanding the codomain to $\Ris$, we observe that $\R\colon \Grp\to\Ris$ has a left adjoint $\L\colon \Ris\to\Grp$ that sends a risandle $(X,\tr)$ to the group
\[
\L(X)\coloneq \langle e_x\ (x\in X)\mid e_{x\tr y}=e_x e_y\inv \rangle.
\]

\begin{rmk}
    The adjunction $\L\dashv\R$ may be viewed as a risandle-theoretic analogue of the adjunction $\operatorname{Adconj}\dashv\operatorname{Conj}$ between quandles and groups introduced in \citelist{\cite{joyce}\cite{matveev}}; cf.\ \cite{book}. 
    
    However, while $\operatorname{Adconj}$ always sends nonempty quandles to infinite groups (see \cite{book}*{Lem.\ 2.27}), Theorem \ref{thm:main} shows that $|\L(X)|=|X|$ for all nonempty, faithful risandles $(X,s)$. In fact, $\L(X)$ may be finite even if $(X,s)$ is unfaithful. For example, if $(X,s)$ is a trivial quandle, then $\L(X)$ is the trivial group.
\end{rmk}

\section{Properties of risandles}\label{sec:props}
In this section, we discuss several algebraic properties of risandles. 

\subsection{Preliminary results}
First, we give an analogue for risacks of the following fact proven in \cite{taGQ}*{Prop.\ 2.11}: \emph{Let $(X,s)$ be a right quasigroup. Then $(X,s)$ is a rack if and only if $s\colon X\to S_{X}$ is a homomorphism into the conjugation quandle $\operatorname{Conj}(S_{X})$.}

\begin{lemma}\label{lem:alt}
    Let $(X,s)$ be a right quasigroup. Then $(X,s)$ is a risack if and only if $s\colon X\to S_X$ is a homomorphism into the risandle $\R(S_X)$.
\end{lemma}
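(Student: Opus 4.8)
The plan is to unwind both conditions into identities among the permutations $s_x\in S_X$ and observe that they coincide term by term, so that the equivalence follows from the definitions with essentially no computation. I would begin by fixing the composition convention so as to avoid a spurious inversion: for $f,g\in S_X$, juxtaposition $fg$ denotes composition (first $g$, then $f$), and $\R(S_X)=(S_X,\tr)$ carries the operation $f\tr g=fg\inv$ from Example \ref{ex:r}. Using the dictionary $s_x(y)=y\tr_X x$ from Section \ref{sec:prelims}, the product $z\tr_X x$ equals $s_x(z)$, so applying $s$ gives $s(z\tr_X x)=s_{s_x(z)}$.

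Next I would spell out the homomorphism condition. Saying that $s\colon (X,\tr_X)\to\R(S_X)$ is a right quasigroup homomorphism means
\[
s(z\tr_X x)=s(z)\tr s(x)\qquad (x,z\in X).
\]
By the previous paragraph the left-hand side is $s_{s_x(z)}$, and by the definition of $\R(S_X)$ the right-hand side is $s_z\tr s_x=s_z s_x\inv$. Hence $s$ is a homomorphism into $\R(S_X)$ if and only if
\[
s_{s_x(z)}=s_z s_x\inv\qquad (x,z\in X).
\]

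Finally I would reconcile this with the risack axiom. Precomposing both sides of the last display with the permutation $s_x$ (equivalently, multiplying on the right by $s_x$ in $S_X$) yields $s_{s_x(z)}s_x=s_z$, which is exactly axiom \eqref{eq:axiom2}. Since $s_x$ is invertible, this step is reversible, so the two identities are equivalent for each pair $(x,z)$. Assembling the chain, $(X,s)$ is a risack iff \eqref{eq:axiom2} holds for all $x,z$, iff $s_{s_x(z)}=s_z s_x\inv$ holds for all $x,z$, iff $s$ is a homomorphism into $\R(S_X)$.

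I do not expect a genuine obstacle here: the lemma is a translation between the operational and the homomorphism formulations of the risack axiom, directly mirroring the rack/conjugation-quandle statement of \cite{taGQ}*{Prop.\ 2.11} quoted above. The only point requiring care is bookkeeping with the conventions — the order in which compositions are written, and the fact that the paper's operation on $S_X$ is $fg\inv$ rather than $f\inv g$ — which is why I would state the composition convention explicitly at the outset.
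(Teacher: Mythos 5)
Your proof is correct and follows essentially the same route as the paper's: both reduce the homomorphism condition to the identity $s_{s_x(z)}=s_z s_x\inv$ and observe that right-multiplying by the invertible $s_x$ makes it equivalent to axiom \eqref{eq:axiom2}. Your explicit remark that the key step is reversible is a slight improvement in completeness over the paper, which dispatches the converse with ``the proof is similar.''
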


\begin{proof}
    ``$\implies$'' Suppose that $(X,s)$ is a risack. For all $x,z\in X$, the risack axiom \eqref{eq:axiom2} implies that
    \[
    s(z\tr_X x)=s(s_x(z))=s_{s_x(z)}=s_zs_x\inv =s_z \tr_{S_X}s_x=s(z)\tr_{S_X}s(x),
    \]
    so $s$ is a homomorphism.

    ``$\impliedby$'' The proof is similar.
\end{proof}

\begin{rmk}
    Lemma \ref{lem:alt} can be strengthened; see Corollary \ref{cor:alt} and cf.\ Proposition \ref{prop:bij}.
\end{rmk}

Next, we study right identity elements of risacks.

\begin{prop}\label{prop:id1}
    Let $(X,s)$ be a risack.
    Then for all $x\in X$, the element $x\tr x$ is a right identity element of $X$.
\end{prop}

\begin{proof}
    We have to show that $a=a\tr(x\tr x)$ for all $x,a\in X$. To that end, define the element \[
    y\coloneq s_x\inv(a).
    \] Then
    \[
    a=s_x(y)=y\tr x=(y\tr x)\tr(x\tr x)=a\tr (x\tr x),
    \]
    where in the third equality we have used the risack axiom \eqref{eq:axiom} with $z\coloneq x$.
\end{proof}

\begin{cor}\label{cor:unique}
    If $(X,s)$ is a nonempty, faithful risack, then $X$ contains a unique right identity element $e$. Moreover, $e=x\tr x$ for all $x\in X$.
\end{cor}

\begin{rmk}
    Corollary \ref{cor:unique} can also be deduced from Theorem \ref{thm:main}.
\end{rmk}

\begin{prop}\label{prop:rack}
    Let $(X,s)$ be a rack. Then $(X,s)$ is a risack if and only if it is a trivial quandle.
\end{prop}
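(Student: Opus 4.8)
The plan is to play the rack axiom off against the risack axiom, both of which constrain the same expression $(y\tr x)\tr(z\tr x)$. First I would record the two identities in the form I need. Unwinding the definition, the rack condition---that every $s_x$ is an endomorphism, i.e.\ $s_x(y\tr z)=s_x(y)\tr s_x(z)$---says
\[
(y\tr z)\tr x = (y\tr x)\tr(z\tr x)
\]
for all $x,y,z\in X$, while the risack axiom \eqref{eq:axiom} says instead that $(y\tr x)\tr(z\tr x)=y\tr z$. Comparing the two right-hand sides gives the single identity
\[
(y\tr z)\tr x = y\tr z
\]
for all $x,y,z\in X$.

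Next I would upgrade this to the statement that every right multiplication map is the identity. Assuming $X\neq\varnothing$ (the empty right quasigroup is vacuously a trivial quandle), I fix $x,w\in X$ and choose any $z\in X$. Because $(X,s)$ is a right quasigroup, $s_z$ is a permutation, so $w=s_z(y)=y\tr z$ for $y\coloneq s_z\inv(w)$. The displayed identity then yields $w\tr x = (y\tr z)\tr x = y\tr z = w$. Since $x$ and $w$ were arbitrary, $s_x=\id_X$ for all $x$, and in particular $x\tr x=s_x(x)=x$, so $(X,s)$ is a trivial quandle.

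For the converse, I would simply verify the risack axiom directly on a trivial quandle: if $y\tr z=y$ for all $y,z$, then both sides of \eqref{eq:axiom} collapse to $y$, so the axiom holds (and a trivial quandle is manifestly a rack). There is no serious obstacle here; the only things to watch are the degenerate empty case and the correct bookkeeping of which axiom supplies which half of the key identity. The entire substance lies in the forward direction, where the point is that the rack and risack axioms force the common quantity $(y\tr x)\tr(z\tr x)$ to equal both $(y\tr z)\tr x$ and $y\tr z$; surjectivity of the maps $s_z$ then propagates the resulting fixed-point property to all of $X$.
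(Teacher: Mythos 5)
Your proposal is correct and is essentially the paper's own argument written in element notation rather than permutation notation: combining the rack identity with the risack axiom to get $s_xs_z=s_z$ (equivalently $(y\tr z)\tr x=y\tr z$) and then cancelling the permutation $s_z$ to conclude $s_x=\id_X$. The paper phrases this as a one-line manipulation of \eqref{eq:axiom2}, but the content is identical.
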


\begin{proof}
    ``$\implies$''
    If $(X,s)$ is both a rack and a risack, then it is easy to see from the risack axiom \eqref{eq:axiom2} that $s_z=s_xs_z$ for all $x,z\in X$. Hence, $s_x=\id_X$ for all $x\in X$, as desired.

    ``$\impliedby$'' Clear.
\end{proof}

\subsection{Alternative definitions of risandles} We provide two characterizations of risandles that are equivalent to Definition \ref{def}. In the following, let $(X,s)$ be a right quasigroup.
\begin{prop}\label{prop:red}
    $(X,s)$ is a risack if and only if it is a risandle. Thus, the risandle axiom \eqref{eq:red} is redundant.
\end{prop}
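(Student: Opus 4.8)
The plan is to note that the ``if'' direction is immediate, since a risandle is by definition a risack satisfying \eqref{eq:red}, and then to prove the ``only if'' direction by deriving axiom \eqref{eq:red} from the risack axiom together with Proposition \ref{prop:id1}. Throughout, I would work with the right-translation form \eqref{eq:axiom2} of the risack axiom rather than the elementwise form \eqref{eq:axiom}, since the computation is cleaner when phrased as a composition of permutations.

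First I would fix $x\in X$ and abbreviate $e\coloneq x\tr x=s_x(x)$. By Proposition \ref{prop:id1}, $e$ is a right identity element of $X$, so $s_e=\id_X$. Next I would substitute $z\coloneq e$ into the risack axiom \eqref{eq:axiom2}, which yields $s_e=s_{s_x(e)}\,s_x$. Writing $w\coloneq s_x(e)=(x\tr x)\tr x$ for the element that appears as the inner term of \eqref{eq:red}, this identity reads $s_e=s_w s_x$; since $s_e=\id_X$, I conclude that $s_w=s_x\inv$.

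Finally I would unwind the right-hand side of \eqref{eq:red}: it equals $(x\tr x)\tr w=s_w(e)=s_x\inv(e)=s_x\inv(s_x(x))=x$, which is exactly the desired identity. As $x\in X$ was arbitrary, axiom \eqref{eq:red} holds for all $x$, so every risack is a risandle. The only genuine choice in the argument is the substitution $z\coloneq e$ into \eqref{eq:axiom2}; once that is made, the whole proof collapses to the relation $s_w=s_x\inv$, so I do not anticipate any substantive obstacle beyond spotting that substitution.
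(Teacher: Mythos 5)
Your proof is correct and is essentially the paper's own argument: both substitute $z\coloneq x\tr x$ into the risack axiom \eqref{eq:axiom2}, use Proposition \ref{prop:id1} to get $s_{x\tr x}=\id_X$ and hence $s_{(x\tr x)\tr x}=s_x\inv$, and then evaluate the right-hand side of \eqref{eq:red} directly. No substantive differences.
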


\begin{proof}
    We have to verify the risandle axiom \eqref{eq:red} for all elements $x\in X$. To that end, define the element $z\coloneq x\tr x$. Then the risack axiom \eqref{eq:axiom2} yields
    \begin{equation}\label{eq:inve}
        s_x\inv=s_z\inv s_{s_x(z)}=\id_X\inv s_{s_x(z)}=s_{s_x(z)},
    \end{equation}
    where in the second equality we have used Proposition \ref{prop:id1}.
    Hence,
    \[
    x=s_x\inv s_x(x)=s_x\inv(z)=s_{s_x(z)}(z)=z\tr(z\tr x)=(x\tr x)\tr ((x\tr x)\tr x),
    \]
    as desired.
\end{proof}

\begin{rmk}
    In \cite{risandle}*{Rem.\ 6.7}, Ishii, Nakamura, and Saito speculated that if there exist risacks that are not risandles, then those risacks could produce homotopy invariants of non-singular flows of a given $3$-manifold. However, Proposition \ref{prop:red} states that no such risacks exist.
\end{rmk}

\begin{rmk}
    While Proposition \ref{prop:red} shows that the risack axiom \eqref{eq:axiom} implies the risandle axiom \eqref{eq:red}, the converse is not true. For example, let $X$ be any set containing at least two elements, let $\sigma\in S_X$ be any nonidentity involution of $X$, and define $s_x\coloneq \sigma$ for all $x\in X$. Then $(X,s)$ is a right quasigroup satisfying the risandle axiom \eqref{eq:red}. At the same time, $(X,s)$ is a nontrivial \emph{permutation rack} or \emph{constant action rack} (cf.\ \cite{fenn}*{Ex.\ 7 in Sec.\ 1}), so by Proposition \ref{prop:rack}, it is not a risack.
\end{rmk}

Proposition \ref{prop:red} states that the definition of risacks coincides with that of risandles. 
Combining this fact with Lemma \ref{lem:alt} yields a coordinate-free characterization of risandles.

\begin{cor}\label{cor:alt}
    $(X,s)$ is a risandle if and only if $s$ is a homomorphism into the risandle $\R(S_X)$.
\end{cor}

\subsection{Faithful risandles}
We strengthen Corollary \ref{cor:alt} in the case that $(X,s)$ is faithful.

\begin{lemma}\label{lem:sets}
    For all nonempty risandles $(X,s)$, we have an equality of sets
    \[
    \RMult(X)=s(X)\subseteq S_X.
    \]
    In other words, $s$ is a surjection onto $\RMult(X)$.
\end{lemma}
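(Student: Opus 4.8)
The plan is to prove that the set $s(X)$ of right multiplication maps is itself a subgroup of $S_X$. Since $\RMult(X)$ is by definition the subgroup generated by $s(X)$, once I know $s(X)$ is closed under composition and inversion the two sets must coincide, yielding the claimed equality. Because $X$ is nonempty, I may fix some $x\in X$; Proposition \ref{prop:id1} then gives $s_{x\tr x}=\id_X$, so $\id_X\in s(X)$ and the set is nonempty.

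For closure under inversion, I would reuse equation \eqref{eq:inve} from the proof of Proposition \ref{prop:red}, which states $s_x\inv=s_{s_x(x\tr x)}$. This already exhibits the inverse of each generator as another right multiplication map, so $s_x\inv\in s(X)$ for every $x\in X$.

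Closure under composition is where the real work lies. I would first rewrite the risack axiom \eqref{eq:axiom2} in the form $s_{s_a(b)}=s_b s_a\inv$, valid for all $a,b\in X$, so that every product of the shape $s_b s_a\inv$ already belongs to $s(X)$. Given arbitrary $x,y\in X$, inversion closure lets me write $s_y=s_w\inv$ for some $w\in X$; then taking $a=w$ and $b=x$ gives $s_x s_y=s_x s_w\inv=s_{s_w(x)}\in s(X)$. Hence $s(X)$ is closed under composition, which together with the previous two steps shows that $s(X)$ is a subgroup.

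The only delicate point is the ordering of the argument: the composition step genuinely depends on having inversion closure established first, and on matching the indices in $s_{s_a(b)}=s_b s_a\inv$ correctly. Everything reduces to this single rewritten identity together with \eqref{eq:inve}, so no further computation beyond bookkeeping of indices is needed.
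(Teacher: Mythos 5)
Your proof is correct and follows essentially the same route as the paper's: both reduce the claim to showing that $s(X)$ is a subgroup of $S_X$, obtaining the identity from Proposition \ref{prop:id1}, inverses from \eqref{eq:inve}, and closure under composition from the risack axiom \eqref{eq:axiom2}. The only difference is cosmetic: the paper closes products directly by substituting $z\coloneq s_x\inv(y)$ into \eqref{eq:axiom2} to get $s_ys_x=s_z$, whereas you route the product through inversion closure first via $s_{s_a(b)}=s_bs_a\inv$; both are valid.
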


\begin{proof}
    By the definition of $\RMult(X)$, it suffices to show that $s(X)$ is a subgroup of $S_X$. Existence of identity and closure under inversion follow from Proposition \ref{prop:id1} and \eqref{eq:inve}, respectively. Given $s_x,s_y\in s(X)$, apply the risack axiom \eqref{eq:axiom2} with the element $z\coloneq s_x\inv(y)$ to obtain
    \[
    s_ys_x=s_{s_x(z)}s_x=s_z\in s(X),
    \]
    showing that $s(X)$ is closed under the group operation of $S_X$.
\end{proof}

\begin{prop}\label{prop:bij}
    Let $(X,s)$ be a nonempty risandle. Then $(X,s)$ is faithful if and only if $s$ is a bijection onto $\RMult(X)$. In this case, $s$ is actually a risandle isomorphism
    \[
    s\colon (X,s)\bij \R(\RMult(X)).
    \]
\end{prop}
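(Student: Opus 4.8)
The plan is to bootstrap the proposition directly from Lemma~\ref{lem:sets} and Corollary~\ref{cor:alt}, so that essentially no new computation is required; the substantive content has already been packed into those two results. First I would dispose of the biconditional. Lemma~\ref{lem:sets} asserts that for a nonempty risandle the map $s$ is a surjection onto $\RMult(X)$. A surjection is a bijection exactly when it is injective, and injectivity of $s$ is precisely the definition of faithfulness. Thus both implications of the ``if and only if'' fall out at once: faithful $\Leftrightarrow$ $s$ injective $\Leftrightarrow$ $s$ a bijection onto $\RMult(X)$.

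For the isomorphism claim, I would first note that $\R(\RMult(X))$ makes sense as a risandle, since $\RMult(X)$ is a group and Example~\ref{ex:r} guarantees $\R$ of any group is a (faithful) risandle. By Corollary~\ref{cor:alt}, $s$ is a risandle homomorphism $(X,s)\to\R(S_X)$, meaning $s(y\tr x)=s_y s_x\inv$ for all $x,y\in X$; and by Lemma~\ref{lem:sets} the image $s(X)$ is exactly $\RMult(X)$. Since $\RMult(X)$ is closed under $h\tr g = hg\inv$, the same identity $s(y\tr x)=s_y \tr_{\RMult(X)} s_x$ exhibits $s$ as a homomorphism when its codomain is corestricted to $\R(\RMult(X))$. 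This corestriction
\[
s\colon (X,s)\to \R(\RMult(X))
\]
is surjective by Lemma~\ref{lem:sets} and injective by faithfulness, hence bijective, and a bijective risandle homomorphism is by definition a risandle isomorphism.

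The only point demanding any care—what I would flag as the (minor) main obstacle—is verifying that the operation $hg\inv$ does not leave the subgroup $\RMult(X)$, so that $\R(\RMult(X))$ is genuinely a risandle and the corestriction of $s$ is a legitimate homomorphism into it rather than merely a map of underlying sets. This is immediate from closure of a subgroup under products and inverses, so no real difficulty arises. Everything else is assembly of the earlier lemmas.
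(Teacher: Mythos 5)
Your proposal is correct and follows exactly the paper's route: the paper's entire proof is ``This follows directly from Corollary \ref{cor:alt} and Lemma \ref{lem:sets},'' and your write-up simply fills in the details of that assembly (surjectivity from Lemma \ref{lem:sets} plus injectivity $=$ faithfulness for the biconditional, and the corestriction of the homomorphism from Corollary \ref{cor:alt} to the subgroup $\RMult(X)=s(X)$ for the isomorphism claim). No gaps.
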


\begin{proof}
    This follows directly from Corollary \ref{cor:alt} and Lemma \ref{lem:sets}.
\end{proof}

\begin{cor}
    Every faithful risandle $(X,s)$ is a \emph{quasigroup}. That is, $(X,s)$ is a right quasigroup such that for all $y\in X$, the \emph{left multiplication map} $x\mapsto y\tr x$ is a permutation of $X$ (cf.\ \cite{quasi}).
\end{cor}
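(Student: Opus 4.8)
The plan is to transport the quasigroup property from a group onto $(X,s)$ using the isomorphism furnished by Proposition \ref{prop:bij}. First I would dispense with the degenerate case $X=\emptyset$: there the quantifier ``for all $y\in X$'' is vacuous, so $(X,s)$ is a quasigroup for trivial reasons. For the remainder assume $(X,s)$ is nonempty, so that Proposition \ref{prop:bij} applies and yields a risandle isomorphism $s\colon (X,s)\bij\R(G)$, where $G\coloneq\RMult(X)$.

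Next I would check the claim directly for the model $\R(G)$. Its operation is $h\tr g=hg\inv$, so for a fixed $y\in G$ the left multiplication map sends $x\mapsto y\tr x=yx\inv$. This is the composite of the inversion map $x\mapsto x\inv$ with left translation by $y$, each of which is a permutation of $G$; hence left multiplication by $y$ is a permutation of $G$, and $\R(G)$ is a quasigroup.

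Finally I would observe that being a quasigroup is invariant under isomorphisms in $\RQ$, and transport the conclusion back along $s$. Concretely, if $\phi$ is an isomorphism in $\RQ$, then the identity $\phi(y\tr x)=\phi(y)\tr\phi(x)$ shows that $\phi$ conjugates each left multiplication map of the source to a left multiplication map of the target; since $\phi$ is bijective, left multiplication maps on one side are permutations exactly when their counterparts on the other side are. Applying this to $s$ and combining it with the previous paragraph completes the proof. I do not anticipate a genuine obstacle here: all of the substantive content is already packaged in Proposition \ref{prop:bij}, and what remains is the routine verification that $\R(G)$ is a quasigroup together with the standard observation that the quasigroup property is an isomorphism invariant.
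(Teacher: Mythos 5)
Your proposal is correct and follows the paper's proof exactly: the paper likewise notes that $\R(G)$ is a quasigroup for every group $G$ and then invokes Proposition \ref{prop:bij} to transport the property to $(X,s)$. Your explicit handling of the empty case and of the isomorphism-invariance of the quasigroup property merely spells out details the paper leaves implicit.
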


\begin{proof}
    It is easy to see that for all groups $G$, the risandle $\R(G)$ is a quasigroup. Hence, the claim follows from Proposition \ref{prop:bij}.
\end{proof}

\section{Proof of Theorem \ref{thm:main}}\label{sec:pf}

In this section, we prove our main result. Given a group $G$, denote its identity element by $1_G$.

\begin{prop}\label{prop:ff}
    $\R$ is fully faithful.
\end{prop}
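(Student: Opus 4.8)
The plan is to reduce the statement to \emph{fullness}, since faithfulness of $\R$ has already been recorded: a group homomorphism $f\colon G\to H$ and the risandle homomorphism $\R(f)$ have the same underlying set map, so $\R(f)=\R(g)$ immediately forces $f=g$. It therefore remains to show, for all groups $G$ and $H$, that every risandle homomorphism $\phi\colon \R(G)\to\R(H)$ is the image under $\R$ of a group homomorphism---equivalently, that $\phi$ is itself a group homomorphism $G\to H$.

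To that end, I would first unpack the hypothesis. By the definition of the operations on $\R(G)$ and $\R(H)$, saying that $\phi$ is a risandle homomorphism means exactly that
\[
\phi(ab\inv)=\phi(a)\phi(b)\inv
\]
for all $a,b\in G$. The strategy is to extract the group-homomorphism axioms from this single relation by making a short sequence of specializations. Setting $a=b$ gives $\phi(1_G)=\phi(a)\phi(a)\inv=1_H$, so $\phi$ preserves identities; setting $a=1_G$ then gives $\phi(b\inv)=\phi(b)\inv$ for all $b\in G$, so $\phi$ preserves inverses.

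Finally, substituting $b\mapsto b\inv$ into the defining relation and invoking the inverse-preserving property yields
\[
\phi(ab)=\phi\!\left(a(b\inv)\inv\right)=\phi(a)\phi(b\inv)\inv=\phi(a)\phi(b),
\]
so $\phi$ is multiplicative and hence a group homomorphism. Since $\R(\phi)=\phi$ as set maps, $\phi$ lies in the image of $\R$ on the relevant hom-set, which proves fullness and completes the argument.

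I do not expect a genuine obstacle here: the entire proof is a short chain of substitutions into one identity. The only point meriting care is the \emph{order} of the deductions---establishing that $\phi$ preserves inverses before deducing multiplicativity, since the multiplicativity step relies on the inverse-preserving step---so I would make sure to present the three specializations in that order rather than attempting to prove multiplicativity directly.
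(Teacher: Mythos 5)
Your proposal is correct and follows essentially the same route as the paper: identity preservation via $a=b$, then multiplicativity by writing $ab=a(b^{-1})^{-1}$ and applying the defining relation twice. The only cosmetic difference is that you isolate inverse-preservation as an explicit intermediate step, whereas the paper folds it into the multiplicativity computation via $1_G\tr_G h=h^{-1}$.
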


\begin{proof}
    Clearly, $\R$ is faithful. To show that $\R$ is full, let $G$ and $K$ be groups, and let $\phi\colon \R(G)\to \R(K)$ be a risandle homomorphism. We have to show that $\phi$ is a group homomorphism when considered as a set-theoretic map $\phi\colon G\to K$. First, $\phi$ preserves identity elements because
    \[
    \phi(1_G)=\phi(1_G\tr_G 1_G)=\phi(1_G)\tr_K\phi(1_G)=\phi(1_G)\phi(1_G)\inv=1_K,
    \]
    as desired. For all $g,h\in G$, we have
    \[
    \phi(gh)=\phi(g(h\inv)\inv)=\phi(g\tr_G(1_G\tr_G h))=\phi(g)\tr_K(\phi(1_G)\tr_K\phi(h)),
    \]
    so by what we just showed,
    \[
    \phi(gh)=\phi(g)\tr_K(1_K\tr_K\phi(h))=\phi(g)(\phi(h)\inv)\inv=\phi(g)\phi(h),
    \]
    as desired.
\end{proof}

\begin{proof}[Proof of Theorem \ref{thm:main}]
    By Proposition \ref{prop:bij}, $\R\colon \Grp\to\Risf$ is essentially surjective, so by Proposition \ref{prop:ff}, $\R$ is an equivalence of categories. Hence, Proposition \ref{prop:bij} provides natural isomorphisms
    \[
    \L(X)\cong \L\R(\RMult(X))\cong\RMult(X)
    \]
    for all faithful risandles $(X,s)$.
\end{proof}

\begin{cor}
    Every group is isomorphic to the right multiplication group of a faithful risandle.
\end{cor}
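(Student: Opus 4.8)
The plan is to produce, for an arbitrary group $G$, an explicit faithful risandle whose right multiplication group is isomorphic to $G$, and the natural candidate is $\R(G)$ itself. By Example \ref{ex:r}, the pair $\R(G)=(G,\tr)$ with $h\tr g\coloneq hg\inv$ is a faithful risandle, so it is an object of $\Risf$. It then remains only to identify $\RMult(\R(G))$ with $G$.

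The quickest route is to invoke the equivalence directly. Theorem \ref{thm:main} asserts that $\R\colon\Grp\bij\Risf$ is an equivalence of categories whose inverse $\L$ sends a nonempty, faithful risandle to its right multiplication group; in particular the composite $\L\R$ is naturally isomorphic to the identity functor on $\Grp$. Evaluating at $G$ yields
\[
\RMult(\R(G))=\L(\R(G))\cong G,
\]
which is exactly the assertion: $G$ is isomorphic to the right multiplication group of the faithful risandle $\R(G)$.

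If one prefers to avoid citing the full equivalence, the same conclusion follows from an elementary computation together with Lemma \ref{lem:sets}. Writing $s_g$ for the right multiplication map of $\R(G)$, one has $s_g(h)=hg\inv$, and a one-line check gives $s_g s_{g'}(h)=hg'^{-1}g\inv=h(gg')\inv=s_{gg'}(h)$, so $g\mapsto s_g$ is a group homomorphism $G\to S_X$. This homomorphism is injective, since $hg\inv=hg'^{-1}$ for all $h$ forces $g=g'$. By Lemma \ref{lem:sets}, $\RMult(\R(G))=s(G)$ is precisely the image of this homomorphism, so $\RMult(\R(G))\cong G$.

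I do not expect a genuine obstacle, as the content of the corollary is already contained in Theorem \ref{thm:main} and Proposition \ref{prop:bij}. The only genuine input is the faithfulness of $\R(G)$, which is supplied by Example \ref{ex:r}, and in the self-contained argument the sole computation is the verification of $s_gs_{g'}=s_{gg'}$, which is routine.
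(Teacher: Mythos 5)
Your proposal is correct and its main argument is exactly the paper's: the corollary is deduced by applying Theorem \ref{thm:main} to get $G\cong\L(\R(G))=\RMult(\R(G))$. Your alternative direct computation is also fine and is precisely the ``can be verified directly without any difficulty'' verification the paper alludes to in the remark following the corollary.
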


\begin{proof}
    For all groups $G$, Theorem \ref{thm:main} implies that $G\cong \RMult(\R(G))$.
\end{proof}

\begin{rmk}
    The isomorphism $G\cong \RMult(\R(G))$ can be verified directly without any difficulty.
\end{rmk}

\section{Free risandles}\label{sec:free}
In this section, we discuss \emph{free risandles} and quotients of risandles by \emph{congruence relations}. These are special cases of notions coming from universal algebra; we refer the reader to \cite{alg-theories} for a reference. 

By Proposition \ref{prop:red}, risandles may be viewed as an algebraic theory with two binary operations $s_-(-)$ and $s_-\inv(-)$ that satisfy the equational laws \begin{equation}\label{eq:rlns}
            s_x\inv s_x(y) =  x=s_ys_y\inv(x),\qquad s_z(y)= s_{s_x(z)}s_x(y).
        \end{equation} In this view, $\Ris$ is the category of set-theoretic models of this algebraic theory, which is complete and cocomplete (see \cite{alg-theories}*{Thm.\ 3.4.5}). Thus, we can take quotients of risandles by \emph{congruence relations}, which are equivalence relations compatible with $s_-(-)$ and $s\inv_-(-)$; see \cite{alg-theories}*{Lem.\ 3.5.1}.

By the same coin, for all sets $X$, the \emph{free risandle} $\langle X\rangle$ generated by $X$ exists and satisfies a universal property identical to that of free groups or free quandles; see \cite{alg-theories}*{Cor.\ 3.7.8}. Furthermore, free risandles have the following canonical construction, which is a special case of \cite{alg-theories}*{Lem.\ 3.2.8}.

\begin{definition}\label{def:free}
    Given a set $X$, recursively define the \emph{universe of words} $W(X)$ to be the smallest set containing $X$ and the formal symbols $s_y(x),s_y\inv(x)$ for all $x,y \in W(X)$.
		Let $V(X)$ be the set of equivalence classes of $W(X)$ under the congruence relation generated by \eqref{eq:rlns} for all $x,y,z \in W(X)$. 
        
        In particular, for all $x\in V(X)$, the first relation in \eqref{eq:rlns} shows that the induced function $s_x\colon V(X)\to V(X)$ is bijective. So, let $s\colon V(X)\to S_{V(X)}$ be the assignment $x\mapsto s_x$. By Proposition \ref{prop:red}, $(V(X),s)$ is a risandle, so we define the \emph{free risandle generated by $X$} to be $\langle X\rangle\coloneq (V(X),s)$.
\end{definition}

\begin{example}
    If $X$ is empty, then $\langle X\rangle$ is the empty risandle.
\end{example}

\begin{example}\label{ex:free}
    Let $X=\{x\}$ be a singleton. It is easy to see from Proposition \ref{prop:id1} that the underlying set of $\langle X\rangle$ is
    \[
    \langle X\rangle = \{s_x^k(x)\mid k\in\Z\},
    \]
    which is infinite. 
    In particular, $\RMult(\langle X\rangle)\cong\Z$.
\end{example}

\section{Fundamental risandles of 3-manifolds}\label{sec:fund}
Following \cite{risandle}, we assume all $3$-manifolds $M$ to be smooth, closed, connected, and oriented. In this section, we introduce \emph{fundamental risandles} $\F(M)$, which are invariants of $3$-manifolds $M$ that categorify the risandle coloring invariants introduced in \cite{risandle}. 

\subsection{Preliminaries}
We summarize several results from \cite{risandle}. Recall from \cite{virtual} that a \emph{virtual knot diagram} is a $4$-regular planar graph whose vertices are decorated with one of three types of crossing data, namely negative and positive \emph{real crossings} (see Figure \ref{fig:crossings}) and \emph{virtual crossings}.

\begin{figure}
    \centering
    \includegraphics[alt={Two local pictures of real crossings in oriented knot diagrams. The first is a negative crossing, and the second is a positive crossing. The arcs are labeled using the right multiplication maps of the fundamental risandle.},width=0.4\linewidth]{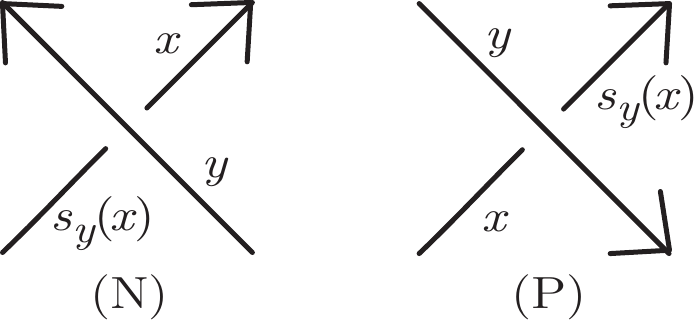}
    \caption{Relations imposed on $\F(M)$ between arcs at negative and positive real crossings.}\label{fig:crossings}
\end{figure}

Let $M$ be a $3$-manifold with a nonsingular flow $\{\phi_t\}_{t\in\mathbb{R}}$ generated by a nonsingular vector field on $M$. Then $\{\phi_t\}_{t\in\mathbb{R}}$ provides certain \emph{E-data} (also called \emph{singularity data} in \cite{flow}) that determines a \emph{flow-spine} of the pair $(M,\{\phi_t\}_{t\in\mathbb{R}})$; see \cite{flow} and cf.\ \citelist{\cite{moves}\cite{risandle}}. 

In \cite{risandle}*{Sec.\ 3}, Ishii, Nakamura, and Saito introduced a way to construct oriented virtual knot diagrams from E-data using Gauss diagrams (cf.\ \cite{virtual}) and Heegaard diagrams. For example, Figure \ref{fig:lens} depicts the virtual knot diagram of the lens space $L(n,1)$ with $n\in\Z^+$; see \cite{risandle}*{App.\ A.1} for details.

Furthermore, in \cite{risandle}*{Thm.\ 4.2}, Ishii, Nakamura, and Saito reformulate a result of Ishii \cite{moves} to show that this virtual knot diagram, considered up to an equivalence relation they call \emph{RIS-equivalence}, is invariant under orientation-preserving diffeomorphisms of $M$. Namely, two virtual knot diagrams are called RIS-equivalent if they are related by planar isotopy and a finite sequence of local moves of three types called \emph{R2-moves}, \emph{I-moves}, and \emph{S-moves}; see \cite{risandle}*{Sec.\ 4}. 

\begin{rmk}\label{rmk:reid0}
    The R2-move described in \cite{risandle} is identical to the second Reidemeister move for classical knots; this corresponds to the axiom that risandles and quandles are right quasigroups (cf.\ \citelist{\cite{joyce}\cite{matveev}}). 
    
    However, the I-move and S-move introduced in \cite{risandle} are distinct from each of the Reidemeister moves for classical knots and virtual knots (see, for example, \cite{virtual}). This corresponds to the differences between the risandle axioms and the quandle axioms; cf.\ Remark \ref{rmk:reid}. 
\end{rmk}

\subsection{Construction of $\F(M)$}

Given a $3$-manifold $M$, fix a representative $D$ of the RIS-equivalence class of virtual knot diagrams representing $M$. By the above discussion, we can refer to $D$ as ``the'' virtual knot diagram of $M$. From $D$, we construct the \emph{fundamental risandle} $\F(M)$ of $M$ in the same way that one constructs the knot quandle of a virtual link from any of its diagrams (see \cite{virtual}*{Fig.\ 11}; cf.\ \citelist{\cite{joyce}\cite{matveev}\cite{fenn}}). The only difference is that we work in $\Ris$, not the category of quandles.

Namely, if $n\in\Z^+$ and $D$ contains $n$ real crossings, then $D$ contains exactly $n$ \emph{arcs} (that is, connected components of $D$). Index the arcs by the set $X\coloneq \{x_1,\dots,x_n\}$. For each real crossing in $D$, impose relations on the free risandle $\langle X\rangle$ as shown in Figure \ref{fig:crossings}.

\begin{definition}[Cf.\ \citelist{\cite{joyce}\cite{matveev}\cite{fenn}}]
    The \emph{fundamental risandle} $\F(M)$ of $M$ is the quotient of the free risandle $\langle X\rangle$ by the congruence relation generated by the $n$ relations imposed at real crossings in~$D$.
\end{definition}

For an example of how to compute $\F(M)$ given $D$, see Section \ref{sec:lens}.

\subsection{Invariance of $\F(M)$}
For essentially the same reason that knot quandles are invariant under ambient isotopy (see \citelist{\cite{joyce}\cite{matveev}} and cf.\ \cite{fenn}), the isomorphism class of $\F(M)$ is invariant under orientation-preserving diffeomorphisms of $M$. 

\begin{thm}[Theorem \ref{thm:inv0}]\label{thm:invariant}
    If $M$ and $M'$ are 3-manifolds related by an orientation-preserving diffeomorphism, then their fundamental risandles $\F(M)$ and $\F(M')$ are isomorphic.
\end{thm}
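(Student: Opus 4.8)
The plan is to reduce the theorem to a purely diagrammatic invariance statement and then verify it move-by-move. By \cite{risandle}*{Thm.\ 4.2}, if $M$ and $M'$ are related by an orientation-preserving diffeomorphism, then their virtual knot diagrams $D$ and $D'$ lie in the same RIS-equivalence class; that is, $D$ and $D'$ are related by a finite sequence of planar isotopies, R2-moves, I-moves, and S-moves. Hence it suffices to show that the isomorphism class of the presented risandle $\F(D)$ is unchanged by each of these four operations. Note that this simultaneously shows that $\F(M)$ is well-defined independently of the chosen representative $D$. The argument is the risandle analogue of the classical proof that the knot quandle is invariant under Reidemeister moves (cf.\ \citelist{\cite{joyce}\cite{matveev}\cite{fenn}}), with the three RIS-moves playing the role of the Reidemeister moves.

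For each move I would compare the risandle presentations arising from the two local diagrams and exhibit an isomorphism between them by a Tietze-style transformation. Concretely, a move alters $D$ only inside a disk, changing finitely many arcs and relations while leaving the rest of the diagram, and its generators, fixed. Using the universal property of the free risandle $\langle X\rangle$ together with the congruence quotient of Section \ref{sec:free}, I would define a homomorphism $\F(D)\to\F(D')$ by sending each generator of $\F(D)$ to an explicit word in the generators of $\F(D')$, taken to be the identity on the arcs lying outside the disk, and then check that every defining relation of $\F(D)$ holds in $\F(D')$. Building the reverse homomorphism symmetrically and confirming that the two composites act as the identity on generators produces the desired isomorphism. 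Planar isotopy is immediate, since it only relabels arcs and crossings. As indicated in Remark \ref{rmk:reid0}, the R2-move invariance reduces to the right quasigroup axiom: the two local relations introduced by an R2-move express $s_x\inv\big(s_x(y)\big)=y$, which cancels precisely because each $s_x$ is a permutation. The I-move and S-move invariance are instead governed by the risandle axioms \eqref{eq:axiom} and \eqref{eq:red}.

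The main obstacle will be the I-move and S-move, since these are intrinsic to the flow-spine and E-data origin of the diagrams and are not among the standard Reidemeister moves for classical or virtual knots. For each, one must read off the precise local relations from the crossing data of Figure \ref{fig:crossings} and then verify that the two presentations are Tietze-equivalent. Here Proposition \ref{prop:red} streamlines the work: because every risack is a risandle, the full strength of the risack axiom \eqref{eq:axiom2}, equivalently $s_z=s_{s_x(z)}s_x$, is available when manipulating the local relations, and I need not separately account for the redundant axiom \eqref{eq:red}. I expect the verification to parallel the way the rack axiom handles the third Reidemeister move in quandle theory, with the substitution $s_z=s_{s_x(z)}s_x$ serving as the identity that lets one slide one crossing past another. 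The remaining bookkeeping, matching arcs across the boundary of the disk and checking that the composites restrict to the identity on generators, is then routine.
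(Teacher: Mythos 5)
Your proposal is correct and follows essentially the same route as the paper: both reduce the statement to RIS-equivalence of the virtual knot diagrams via \cite{risandle}*{Thm.\ 4.2} and then check invariance of the presented risandle under planar isotopy, R2-moves, I-moves, and S-moves, attributing each move to the corresponding right-quasigroup or risandle axiom. The only difference is one of packaging: the paper outsources the local move-by-move verification to the coloring-invariance computation in \cite{risandle}*{Thm.\ 5.2}, whereas you propose carrying out the equivalent Tietze-transformation argument on presentations directly.
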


\begin{proof}
    By \cite{risandle}*{Thm.\ 4.2}, the virtual knot diagrams of $M$ and $M'$ are RIS-equivalent. Therefore, it suffices to show that the isomorphism class of $\F(M)$ is invariant under applying R2-moves, I-moves, and S-moves to the virtual knot diagram used to construct $\F(M)$. 
    
    This verification is identical to the one for risandle coloring invariants in \cite{risandle}*{Thm.\ 5.2}. There, the authors show that invariance under the  R2-move, I-move, and S-move respectively follow from the bijectivity of $s_x$ for all $x\in X$, the risack axiom \eqref{eq:axiom}, and the risandle axioms \eqref{eq:axiom} and \eqref{eq:red}.
\end{proof}

\subsection{Categorification of coloring invariants}
In \cite{risandle}, Ishii, Nakamura, and Saito introduced invariants of $3$-manifolds $M$ based on \emph{colorings} of virtual knot diagrams $D$ of $M$ by risandles $R$. 

By the construction of $\F(M)$, a coloring of $D$ by $R$ in the sense of \cite{risandle} is equivalent to a risandle homomorphism $\F(M)\to R$. To see this, note that colorings of $D$ are precisely assignments of arcs of $D$ to elements of $R$ that preserve the relations between generators of $\F(M)$ imposed at each real crossing; see \cite{risandle}*{Fig.\ 18}. Since $\F(M)$ is generated by the set of arcs in $D$, and homomorphisms $\F(M)\to R$ are determined by where they send generators, the claim follows.

Thus, in Ishii, Nakamura, and Saito's notation, the \emph{risandle coloring number} of $M$ by $R$ equals
\begin{equation}\label{eq:cat}
    c_R(M)=|{\operatorname{Hom}_\Ris (\F(M),R)}|.
\end{equation}
In this light, fundamental risandles can be viewed as a categorification of risandle coloring invariants.
This exactly analogizes the relationship between knot quandles and quandle coloring invariants of knots; see, for example, \cite{book}*{Prop.\ 3.7}.

\section{The case of lens spaces}\label{sec:lens}
Fix a positive integer $n\in\Z^+$, and let $M$ be the lens space $L(n,1)$.
In preparation for Theorem \ref{thm:2}, we study the fundamental risandle $\F(M)$ of $M$.

\begin{figure}
    \centering
    \includegraphics[alt={A connected four-valent planar graph with n negative crossings and n arcs. The arcs are oriented and labeled x sub zero through x sub quantity n minus one.},width=0.5\linewidth]{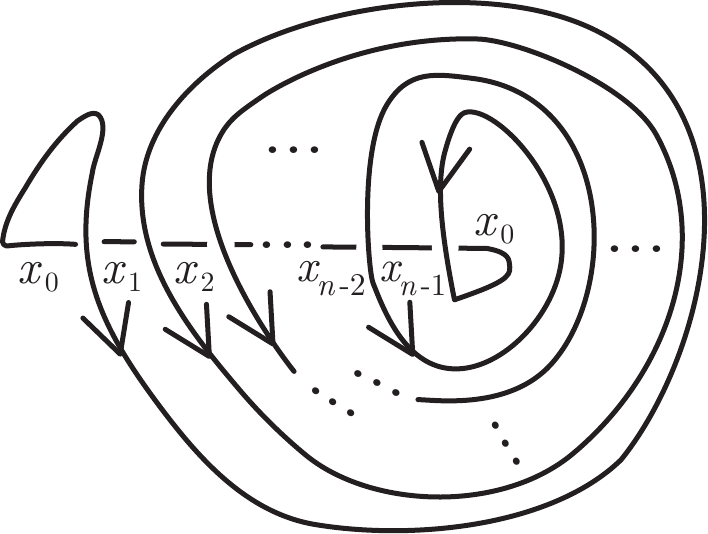}
    \caption{Virtual knot diagram $D$ of the lens space $M=L(n,1)$ with $n\in\Z^+$, as constructed in \cite{risandle}*{App.\ A.1}.}\label{fig:lens}
\end{figure}

\subsection{Computation of $\F(M)$}

Consider the virtual knot diagram $D$ of $M$ depicted in Figure \ref{fig:lens}, which Ishii, Nakamura, and Saito \cite{risandle}*{App.\ A.1} constructed using the standard Heegaard diagram of $M$. Index the arcs in $D$ by the set $X\coloneq \{x_0,\dots,x_{n-1}\}$ as depicted in Figure \ref{fig:lens}. Then $\F(M)$ is the quotient of the free risandle $\langle X\rangle$ by the congruence relation generated by the $n$ relations
    \[
    s_{x_0}(x_i)=x_{i+1},
    \]where the indices $i,i+1$ are considered modulo $n$. 

    To simplify our notation, let $x\coloneq x_0$.
    By a straightforward inductive argument, $\F(M)$ is isomorphic to the quotient of the free risandle $\langle\{x\}\rangle$ by the congruence relation generated by the relation $x=s_{x}^n(x)$. That is, \begin{equation}\label{eq:lens}
        \F(M)\cong \langle x\mid x=s_{x}^n(x)\rangle=\{s_{x}^k(x)\mid 0\leq k\leq n-1\},
    \end{equation}
    where the last equality follows from comparing $\F(M)$ with the free risandle in Example \ref{ex:free}. In particular, $|\F(M)|=n$, and $s_{x}$ is an element of order $n$ in $\RMult(\F(M))$.

\subsection{Properties} Identify the underlying set of $\F(M)$ with the third expression of \eqref{eq:lens}.

\begin{prop}\label{prop:cyclic}
    $\RMult(\F(M))$ is isomorphic to $\Z/n\Z$.
\end{prop}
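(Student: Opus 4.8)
The plan is to show that $\RMult(\F(M))$ is a cyclic group of order $n$. From the computation in \eqref{eq:lens}, I already know that the underlying set of $\F(M)$ is $\{s_x^k(x)\mid 0\leq k\leq n-1\}$, so $|\F(M)|=n$, and that $s_x$ has order $n$ in $\RMult(\F(M))$. The first and main step is to prove that every right multiplication map of $\F(M)$ is a power of $s_x$; this will immediately give $\RMult(\F(M))=\langle s_x\rangle$, a cyclic group, and the order statement then pins it down as $\Z/n\Z$.

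To carry this out, I would first invoke Lemma \ref{lem:sets}, which tells me that $\RMult(\F(M))=s(\F(M))$ as sets, since $\F(M)$ is a nonempty risandle. Thus it suffices to show that for each generator $s_{s_x^k(x)}$ (as $k$ ranges over $0,\dots,n-1$), we have $s_{s_x^k(x)}\in\langle s_x\rangle$. The key tool is equation \eqref{eq:inve} from the proof of Proposition \ref{prop:red}, or more directly the risack axiom in the form \eqref{eq:axiom2}, namely $s_z=s_{s_x(z)}s_x$. Applying \eqref{eq:axiom2} with $z\coloneq s_x^{k-1}(x)$ gives $s_{s_x^{k-1}(x)}=s_{s_x^k(x)}\,s_x$, so that
\[
s_{s_x^k(x)}=s_{s_x^{k-1}(x)}\,s_x\inv.
\]
Iterating this recursion down from $k$ to $0$ yields $s_{s_x^k(x)}=s_{x}\,s_x^{-k}=s_x^{1-k}$, which is manifestly a power of $s_x$. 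Hence every generator lies in $\langle s_x\rangle$, and combined with Lemma \ref{lem:sets} this proves $\RMult(\F(M))=\langle s_x\rangle$.

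It remains to identify the order of this cyclic group. Since I have already established in the discussion surrounding \eqref{eq:lens} that $s_x$ is an element of order exactly $n$ in $\RMult(\F(M))$, the cyclic group $\langle s_x\rangle$ has order $n$, giving $\RMult(\F(M))\cong\Z/n\Z$. I should double-check the claim that $s_x$ has order exactly $n$ rather than merely dividing $n$: the relation $x=s_x^n(x)$ guarantees $s_x^n$ fixes $x$, but I need $s_x^n=\id$ on all of $\F(M)$ and no smaller power to equal the identity. The former follows because every element is of the form $s_x^k(x)$ and $s_x^n\bigl(s_x^k(x)\bigr)=s_x^k\bigl(s_x^n(x)\bigr)=s_x^k(x)$; the latter follows from $|\F(M)|=n$ together with the fact that the $n$ elements $x,s_x(x),\dots,s_x^{n-1}(x)$ are distinct, so $s_x$ acts as an $n$-cycle on them.

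The main obstacle is the final order computation, specifically verifying that $s_x$ acts as a genuine $n$-cycle and not merely with order dividing $n$. This hinges on the $n$ listed elements being pairwise distinct, which is exactly the content of the isomorphism \eqref{eq:lens} with the free risandle of Example \ref{ex:free}; I would lean on the comparison already made there, where distinctness of the $s_x^k(x)$ is inherited from the infinite free risandle $\langle\{x\}\rangle$ before quotienting by $x=s_x^n(x)$. Everything else is a short algebraic manipulation using \eqref{eq:axiom2}.
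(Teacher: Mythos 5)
Your argument is correct, but it takes a more computational route than the paper's, which is a pure counting argument: Lemma \ref{lem:sets} gives $\RMult(\F(M))=s(\F(M))$ as sets, hence $|\RMult(\F(M))|\leq|\F(M)|=n$, and since $s_x$ has order $n$, the subgroup $\langle s_x\rangle$ of order $n$ must already be all of $\RMult(\F(M))$. You instead identify every generator explicitly via the recursion $s_{s_x^{k}(x)}=s_{s_x^{k-1}(x)}s_x\inv$ coming from \eqref{eq:axiom2}, obtaining $s_{s_x^k(x)}=s_x^{1-k}$ and hence $\RMult(\F(M))=\langle s_x\rangle$ directly; note that for this reduction you do not actually need Lemma \ref{lem:sets}, since $\RMult(\F(M))$ is by definition generated by the maps $s_y$. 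Your approach buys an explicit description of all the right multiplication maps (for instance, it re-derives $s_{s_x(x)}=\id_{\F(M)}$, consistent with Proposition \ref{prop:id1}), while the paper's is shorter and uses nothing beyond what is already recorded after \eqref{eq:lens}. Your extra verification that $s_x$ has order exactly $n$ is sound and fills in a detail the paper leaves implicit, though both arguments ultimately rest on the distinctness of the $n$ elements $s_x^k(x)$ asserted in \eqref{eq:lens}.
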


\begin{proof}
    Since $|\F(M)|=n$, Lemma \ref{lem:sets} shows that $\RMult(\F(M))$ is a group of order at most $n$. Since $s_{x}$ has order $n$ in $\RMult(\F(M))$, the claim follows.
\end{proof}

\begin{cor}\label{cor:faithful}
    $\F(M)$ is faithful. 
\end{cor}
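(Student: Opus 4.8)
The plan is to deduce faithfulness from a cardinality count, using the characterization of faithful risandles in Proposition \ref{prop:bij}. That proposition states that a nonempty risandle $(X,s)$ is faithful precisely when $s$ is a bijection onto $\RMult(X)$. Since $\F(M)$ is a nonempty risandle, the entire task reduces to checking this bijectivity for the case $X=\F(M)$.

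First I would recall that by Lemma \ref{lem:sets}, the structure map $s$ is automatically a surjection onto $\RMult(\F(M))$, so the only remaining point is injectivity. For finite sets this comes for free once the cardinalities match: a surjection between finite sets of equal size is a bijection. The matching of cardinalities is exactly what the preceding computations supply. From \eqref{eq:lens} we have $|\F(M)|=n$, and Proposition \ref{prop:cyclic} gives $\RMult(\F(M))\cong\Z/n\Z$, hence $|\RMult(\F(M))|=n$ as well. Thus $s$ is a surjection between two finite sets each of size $n$, therefore a bijection, and Proposition \ref{prop:bij} concludes that $\F(M)$ is faithful.

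I do not expect any real obstacle at this stage: all the substantive work—computing $|\F(M)|=n$ and identifying $\RMult(\F(M))\cong\Z/n\Z$—has already been carried out in this section. The corollary is simply the observation that these two counts agree, which forces the generically surjective map $s$ to be injective as well. The only thing to double-check is that $\F(M)$ genuinely satisfies the hypotheses of Proposition \ref{prop:bij} (nonemptiness and being a risandle), both of which are immediate from its construction as a quotient of the nonempty free risandle $\langle x\rangle$.
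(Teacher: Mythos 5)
Your proposal is correct and matches the paper's own argument: both deduce faithfulness from the fact that $s$ is a surjection (Lemma \ref{lem:sets}) between the $n$-element set $\F(M)$ and the $n$-element group $\RMult(\F(M))\cong\Z/n\Z$ (Proposition \ref{prop:cyclic}), hence a bijection. Your extra appeal to Proposition \ref{prop:bij} is harmless but not needed, since faithfulness is by definition the injectivity of $s$.
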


\begin{proof}
    Since $|\F(M)|=n<\infty$, the claim follows from Lemma \ref{lem:sets} and Proposition \ref{prop:cyclic}.
\end{proof}

\begin{rmk}\label{rmk:reid}
Since $\F(M)$ is finite, $\F(M)$ is not isomorphic to the free risandle with one generator; cf.\ Example \ref{ex:free}. 
    This shows that no finite sequence of planar isotopies, R2-moves, I-moves, or S-moves can recover an unknotted circle from $D$; cf.\ Remark \ref{rmk:reid0}. 
    
    By contrast, applying the first classical Reidemeister move $n$ times to $D$ does yield an unknotted circle. Accordingly, if $D$ is interpreted as a knot diagram in the usual sense (that is, up to Reidemeister moves), then the corresponding knot quandle is that of an unknot; cf.\ \citelist{\cite{joyce}\cite{matveev}\cite{fenn}}.
\end{rmk}

\section{Fundamental risandles versus fundamental groups}\label{sec:end}
\subsection{Open questions}
We pose two questions about the relationship between fundamental risandles and fundamental groups. As in Section \ref{sec:fund}, we assume all $3$-manifolds $M$ to be smooth, closed, connected, and oriented. 

\begin{prob}\label{conj1}
    Under what conditions on $M$ is there a group isomorphism
    \begin{equation}\label{eq:conj1}
        \L(\F(M))\cong \pi_1(M)?
    \end{equation}
\end{prob}

\begin{prob}\label{conj2}
    Under what conditions on $M$ is there a risandle isomorphism
    \begin{equation}\label{eq:conj2}
        \F(M)\cong \R(\pi_1(M))?
    \end{equation}
\end{prob}

Note that if $\F(M)$ is unfaithful, then the isomorphism \eqref{eq:conj2} is impossible. Conversely, Theorem \ref{thm:main} immediately yields the following.

\begin{prop}\label{prop:faithful}
    If $\F(M)$ is faithful, then the following are equivalent:
    \begin{enumerate}[({A}1)]
        \item\label{A1} The isomorphism \eqref{eq:conj1} holds.
        \item\label{A2} The isomorphism \eqref{eq:conj2} holds.
        \item\label{A3} There is a group isomorphism
        \[
        \RMult(\F(M))\cong \pi_1(M).
        \]
    \end{enumerate}
\end{prop}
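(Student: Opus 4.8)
The plan is to deduce everything from Theorem \ref{thm:main}, exploiting that an equivalence of categories both preserves and reflects isomorphisms, so that no genuinely new argument is required. First I would record that $X\coloneq\F(M)$ is an object of $\Risf$: it is faithful by hypothesis, and it is nonempty because it is a quotient of the free risandle on the $n\in\Z^+$ arcs of the diagram $D$. Consequently the results of Sections \ref{sec:props}--\ref{sec:pf} apply to $\F(M)$. In particular, Proposition \ref{prop:bij} supplies a risandle isomorphism $\F(M)\cong\R(\RMult(\F(M)))$, and the proof of Theorem \ref{thm:main} supplies a group isomorphism $\L(\F(M))\cong\RMult(\F(M))$. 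These two isomorphisms are the only inputs beyond Theorem \ref{thm:main} itself.

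Next I would dispatch \ref{A1} $\Leftrightarrow$ \ref{A3}, which is purely formal: composing the isomorphism $\L(\F(M))\cong\RMult(\F(M))$ with a hypothetical isomorphism in \eqref{eq:conj1} shows that \ref{A1} holds precisely when $\RMult(\F(M))\cong\pi_1(M)$, which is \ref{A3}. Conversely an isomorphism in \ref{A3} composes with the same isomorphism to yield \eqref{eq:conj1}.

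For \ref{A2} $\Leftrightarrow$ \ref{A3}, I would invoke that $\R\colon\Grp\to\Risf$ and $\L\colon\Risf\to\Grp$ are mutually inverse equivalences by Theorem \ref{thm:main}. For \ref{A3} $\Rightarrow$ \ref{A2}, applying the functor $\R$ to an isomorphism $\RMult(\F(M))\cong\pi_1(M)$ gives $\R(\RMult(\F(M)))\cong\R(\pi_1(M))$, and composing with $\F(M)\cong\R(\RMult(\F(M)))$ yields \eqref{eq:conj2}. For \ref{A2} $\Rightarrow$ \ref{A3}, applying the functor $\L$ to an isomorphism $\F(M)\cong\R(\pi_1(M))$ gives $\L(\F(M))\cong\L\R(\pi_1(M))\cong\pi_1(M)$, where the last step uses the equivalence $\L\R\cong\id_{\Grp}$; composing with $\L(\F(M))\cong\RMult(\F(M))$ gives \ref{A3}.

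I expect no real obstacle: all the content is carried by Theorem \ref{thm:main}, and what remains is the bookkeeping that functors preserve isomorphisms while an equivalence additionally reflects them. The only point demanding care is confirming that $\F(M)\in\Risf$, so that Theorem \ref{thm:main} and Proposition \ref{prop:bij} are applicable; this is exactly where the faithfulness hypothesis enters. Indeed, without it, $\R(\pi_1(M))$ being faithful would already preclude \eqref{eq:conj2}, as noted before the statement.
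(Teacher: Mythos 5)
Your proposal is correct and is exactly the intended argument: the paper offers no written proof, stating only that Theorem \ref{thm:main} ``immediately yields'' the proposition, and the details you supply (nonemptiness and faithfulness of $\F(M)$ placing it in $\Risf$, the isomorphism $\F(M)\cong\R(\RMult(\F(M)))$ from Proposition \ref{prop:bij}, the identification $\L(\F(M))\cong\RMult(\F(M))$, and the fact that the equivalence $\L\dashv\R$ preserves and reflects isomorphisms) are precisely the bookkeeping the author leaves implicit.
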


\begin{cor}
    Condition \ref{A2} always implies conditions \ref{A1} and \ref{A3}.
\end{cor}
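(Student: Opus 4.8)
The plan is to show that condition \ref{A2} cannot hold unless $\F(M)$ is faithful, at which point the corollary reduces immediately to Proposition \ref{prop:faithful}. The key observation is that the faithfulness hypothesis of that proposition is automatically certified by \ref{A2} itself, so no genuinely new argument is required.

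First I would recall from Example \ref{ex:r} that for every group $G$, the risandle $\R(G)$ is faithful; in particular, $\R(\pi_1(M))$ is faithful. Since faithfulness is evidently preserved under isomorphisms in $\Ris$ (an isomorphism conjugates right multiplication maps, hence carries an injective assignment $s$ to an injective one), if the isomorphism $\F(M)\cong\R(\pi_1(M))$ of \eqref{eq:conj2} holds, then $\F(M)$ must itself be faithful. This is exactly the content of the remark immediately preceding Proposition \ref{prop:faithful}, read contrapositively.

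Second, with $\F(M)$ now known to be faithful, I would simply invoke Proposition \ref{prop:faithful}, whose sole hypothesis is precisely this faithfulness. The proposition asserts that \ref{A1}, \ref{A2}, and \ref{A3} are equivalent under that hypothesis, so in particular \ref{A2} implies both \ref{A1} and \ref{A3}. The main (and only) subtlety is the first step: recognizing that \ref{A2} self-certifies the faithfulness of $\F(M)$ because its codomain risandle $\R(\pi_1(M))$ is always faithful. Beyond this observation the corollary is a direct consequence of the proposition, and I would expect the write-up to be no more than two or three sentences.
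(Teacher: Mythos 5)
Your argument is exactly the paper's: condition \ref{A2} forces $\F(M)$ to be faithful (since $\R(\pi_1(M))$ is faithful and faithfulness is preserved by risandle isomorphisms, as noted in the remark before Proposition \ref{prop:faithful}), and then Proposition \ref{prop:faithful} gives the equivalence of all three conditions. The proposal is correct and matches the paper's proof, merely spelling out the faithfulness step in slightly more detail.
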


\begin{proof}
    If condition \ref{A2} holds, then $\F(M)$ is faithful, so the claim follows from Proposition \ref{prop:faithful}.
\end{proof}

\subsubsection{Discussion}
There are two motivations for Problems \ref{conj1} and \ref{conj2}. Since risandles were inspired by quandles in \cite{risandle}, we would like to find an analogue for risandles of the following result of \citelist{\cite{joyce}\cite{matveev}} (cf.\ \citelist{\cite{quasi}\cite{book}}): \emph{If $\mathcal{Q}(L)$ is the knot quandle of a link $L\subset S^3$, then there is a group isomorphism} \[\operatorname{Adconj}(\mathcal{Q}(L))\cong \pi_1(S^3\setminus L) .\]

The second motivation is more topological. 
In their paper introducing risandles, Ishii, Nakamura, and Saito \cite{risandle} used risandle coloring invariants to distinguish between all lens spaces $L(n,1)$ with $n\in\Z^+$. They similarly distinguished the $3$-sphere $S^3$ from the Poincar\'e homology $3$-sphere. 

However, these manifolds are also distinguished by their fundamental groups. This raises the question of whether risandles provide stronger or weaker invariants of $3$-manifolds than fundamental groups. If the isomorphism \eqref{eq:conj1} were true for all $3$-manifolds $M$, then fundamental risandles would be at least as strong as fundamental groups, and vice versa for \eqref{eq:conj2}. 

\subsection{An infinite class of examples}
We conclude this note by showing that infinitely many $3$-manifolds $M$ satisfy the isomorphisms \eqref{eq:conj1} and \eqref{eq:conj2}. Namely, let $n\in\Z^+$ be a positive integer, and recall that the fundamental group of the lens space $M\coloneq L(n,1)$ is \[\pi_1(M)\cong \Z/n\Z.\]

\begin{thm}[Theorem \ref{thm:3}]\label{thm:2}
    For all positive integers $n\in\Z^+$, the lens space $M=L(n,1)$ achieves all of the conditions in Proposition \ref{prop:faithful}.
\end{thm}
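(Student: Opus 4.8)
The plan is to reduce everything to the single, already-computed condition \ref{A3} of Proposition \ref{prop:faithful}. First I would invoke Corollary \ref{cor:faithful}, which establishes that $\F(M)$ is faithful for $M=L(n,1)$. This is precisely the hypothesis needed to apply Proposition \ref{prop:faithful}, so conditions \ref{A1}, \ref{A2}, and \ref{A3} are equivalent for this $M$. Consequently, it suffices to verify any one of the three; all of them then follow automatically.

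Next I would target condition \ref{A3}, namely the group isomorphism $\RMult(\F(M))\cong\pi_1(M)$, since both sides have already been identified. On the risandle side, Proposition \ref{prop:cyclic} gives $\RMult(\F(M))\cong\Z/n\Z$. On the topological side, the fundamental group of the lens space $L(n,1)$ is the standard fact $\pi_1(M)\cong\Z/n\Z$, which was recalled immediately before the theorem. Matching these two computations yields
\[
\RMult(\F(M))\cong\Z/n\Z\cong\pi_1(M),
\]
which is exactly condition \ref{A3}.

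Finally, I would close the argument by appealing once more to the equivalence in Proposition \ref{prop:faithful}: since $\F(M)$ is faithful and \ref{A3} holds, conditions \ref{A1} and \ref{A2} hold as well. Thus $M=L(n,1)$ achieves all of the conditions of Proposition \ref{prop:faithful} for every $n\in\Z^+$, and since there are infinitely many such lens spaces, this also delivers Theorem \ref{thm:3}.

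I do not expect any genuine obstacle here, because the substantive work has already been carried out upstream: the explicit computation of $\F(M)$ in \eqref{eq:lens}, the identification $\RMult(\F(M))\cong\Z/n\Z$ in Proposition \ref{prop:cyclic}, and the faithfulness in Corollary \ref{cor:faithful}. The only thing this theorem adds is the observation that the risandle-theoretic group $\RMult(\F(M))$ coincides with the topological invariant $\pi_1(M)$, after which Theorem \ref{thm:main} (through Proposition \ref{prop:faithful}) upgrades a single group isomorphism into the full package of isomorphisms \eqref{eq:conj1} and \eqref{eq:conj2}. The mildest point to be careful about is simply citing $\pi_1(L(n,1))\cong\Z/n\Z$ correctly, which is classical.
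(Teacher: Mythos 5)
Your proposal is correct and follows essentially the same route as the paper: both invoke Corollary \ref{cor:faithful} to apply Proposition \ref{prop:faithful}, then verify condition \ref{A3} by combining Proposition \ref{prop:cyclic} with the classical fact $\pi_1(L(n,1))\cong\Z/n\Z$. No gaps.
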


\begin{proof}
    By Corollary \ref{cor:faithful} and Proposition \ref{prop:faithful}, it suffices to show that $\RMult(\F(M))\cong\Z/n\Z$. But this is precisely the statement of Proposition \ref{prop:cyclic}.
\end{proof}

\begin{rmk}
    Due to Theorems \ref{thm:main} and \ref{thm:2}, a certain result of Ishii, Nakamura, and Saito \cite{risandle}*{Prop.\ 6.3} about an invariant they call \emph{effective $n$-risandle colorability} reduces to a straightforward group-theoretic statement. 
    
    Translating a definition of \cite{risandle} from risandle theory to group theory, call a group homomorphism $\phi\colon  G\to H$ \emph{effective} if, for all proper normal subgroups $N\triangleleft H$, the induced homomorphism $G\to H/N$ is nontrivial. (Equivalently, the normal closure of $\varphi(G)$ in $H$ is $H$ itself.)
    Then for all nonnegative integers $m,n\geq 0$, there exists an effective group homomorphism $\Z/m\Z\to\Z/n\Z$ if and only if $n\mid m$. This fact both recovers and provides a converse to \cite{risandle}*{Prop.\ 6.3}.
\end{rmk}

\bibliographystyle{amsplain}

\begin{bibdiv}
\begin{biblist}

\bib{bonatto}{article}{
    AUTHOR = {Bonatto, Marco},
     TITLE = {Superconnected left quasigroups and involutory quandles},
   JOURNAL = {Comm. Algebra},
    VOLUME = {50},
      YEAR = {2022},
    NUMBER = {9},
     PAGES = {3978--3994},
      ISSN = {0092-7872,1532-4125},
  REVIEW = {\MR{4442483}},
       DOI = {10.1080/00927872.2022.2057507},
}

\bib{alg-theories}{book}{,
    AUTHOR = {Borceux, Francis},
     TITLE = {Handbook of categorical algebra, volume 2},
     SUBTITLE = {Categories and structures}
    SERIES = {Encyclopedia of Mathematics and its Applications},
    VOLUME = {51},
 PUBLISHER = {Cambridge University Press, Cambridge},
      YEAR = {1994},
     PAGES = {xviii+443},
      ISBN = {0-521-44179-X},
  review = {\MR{1313497}},
}

\bib{fenn}{article}{
      author={Fenn, Roger},
      author={Rourke, Colin},
       title={Racks and links in codimension two},
        date={1992},
        ISSN={0218-2165,1793-6527},
     journal={J. Knot Theory Ramifications},
      volume={1},
      number={4},
       pages={343\ndash 406},
         doi={10.1142/S0218216592000203},
      review={\MR{1194995}},
}

\bib{hat}{article}{
    AUTHOR = {Hatakenaka, Eri},
    AUTHOR = {Nosaka, Takefumi},
     TITLE = {Some topological aspects of 4-fold symmetric quandle
              invariants of 3-manifolds},
   JOURNAL = {Internat. J. Math.},
    VOLUME = {23},
      YEAR = {2012},
    NUMBER = {7},
     PAGES = {1250064, 31},
      ISSN = {0129-167X,1793-6519},
      REVIEW = {\MR{2945646}},
       DOI = {10.1142/S0129167X12500644},
}

\bib{volume}{article}{
    AUTHOR = {Inoue, Ayumu,}
    AUTHOR = {Kabaya, Yuichi},
     TITLE = {Quandle homology and complex volume},
   JOURNAL = {Geom. Dedicata},
    VOLUME = {171},
      YEAR = {2014},
     PAGES = {265--292},
      ISSN = {0046-5755,1572-9168},
  REVIEW = {\MR{3226796}},
       DOI = {10.1007/s10711-013-9898-2},
}

\bib{flow}{article}{
    AUTHOR = {Ishii, Ippei},
     TITLE = {Flows and spines},
   JOURNAL = {Tokyo J. Math.},
    VOLUME = {9},
      YEAR = {1986},
    NUMBER = {2},
     PAGES = {505--525},
      ISSN = {0387-3870},
  REVIEW = {\MR{875203}},
       DOI = {10.3836/tjm/1270150734},
}

\bib{moves}{article}{
    AUTHOR = {Ishii, Ippei},
     TITLE = {Moves for flow-spines and topological invariants of
              3-manifolds},
   JOURNAL = {Tokyo J. Math.},
    VOLUME = {15},
      YEAR = {1992},
    NUMBER = {2},
     PAGES = {297--312},
      ISSN = {0387-3870},
  review = {\MR{1197099}},
       DOI = {10.3836/tjm/1270129457},
}

\bib{risandle}{article}{
    AUTHOR = {Ishii, Ippei},
    AUTHOR = {Nakamura, Takuji},
    AUTHOR = {Saito, Toshio},
     TITLE = {A coloring invariant of 3-manifolds derived from their
              flow-spines and virtual knot diagrams},
   JOURNAL = {Canad. J. Math.},
    VOLUME = {76},
      YEAR = {2024},
    NUMBER = {3},
     PAGES = {984--1004},
      ISSN = {0008-414X,1496-4279},
  REVIEW = {\MR{4747298}},
       DOI = {10.4153/S0008414X23000299},
}

\bib{joyce}{article}{
      author={Joyce, David},
       title={A classifying invariant of knots, the knot quandle},
        date={1982},
        ISSN={0022-4049,1873-1376},
     journal={J. Pure Appl. Algebra},
      volume={23},
      number={1},
       pages={37\ndash 65},
         doi={10.1016/0022-4049(82)90077-9},
      review={\MR{638121}},
}

\bib{virtual}{article}{
    AUTHOR = {Kauffman, Louis H.},
     TITLE = {Virtual knot theory},
   JOURNAL = {European J. Combin.},
    VOLUME = {20},
      YEAR = {1999},
    NUMBER = {7},
     PAGES = {663--690},
      ISSN = {0195-6698,1095-9971},
  review = {\MR{1721925}},
       DOI = {10.1006/eujc.1999.0314},
}

\bib{matveev}{article}{
      author={Matveev, S.~Vladimir},
       title={Distributive groupoids in knot theory},
        date={1982},
        ISSN={0368-8666},
     journal={Mat. Sb. (N.S.)},
      volume={119(161)},
      number={1},
       pages={78\ndash 88, 160},
      review={\MR{672410}},
}

\bib{nosaka}{article}{
    AUTHOR = {Nosaka, Takefumi},
     TITLE = {4-fold symmetric quandle invariants of 3-manifolds},
   JOURNAL = {Algebr. Geom. Topol.},
    VOLUME = {11},
      YEAR = {2011},
    NUMBER = {3},
     PAGES = {1601--1648},
      ISSN = {1472-2747,1472-2739},
  review = {\MR{2821435}},
       doi = {10.2140/agt.2011.11.1601},
}

\bib{dw}{article}{
    AUTHOR = {Nosaka, Takefumi},
     TITLE = {On third homologies of groups and of quandles via the
              {D}ijkgraaf--{W}itten invariant and {I}noue--{K}abaya map},
   JOURNAL = {Algebr. Geom. Topol.},
    VOLUME = {14},
      YEAR = {2014},
    NUMBER = {5},
     PAGES = {2655--2691},
      ISSN = {1472-2747,1472-2739},
      REVIEW= {\MR{3276844}},
       DOI = {10.2140/agt.2014.14.2655},
}

\bib{book}{book}{
      author={Nosaka, Takefumi},
       title={Quandles and topological pairs},
      series={SpringerBriefs in Mathematics},
   publisher={Springer, Singapore},
        date={2017},
        ISBN={978-981-10-6792-1; 978-981-10-6793-8},
         doi={10.1007/978-981-10-6793-8},
        subtitle={Symmetry, knots, and cohomology},
      review={\MR{3729413}},
}

\bib{skew}{article}{
    AUTHOR = {Nosaka, Takefumi},
     TITLE = {Skew-rack cocycle invariants of closed 3-manifolds},
   JOURNAL = {to appear in Algebr. Geom. Topol.},
}

\bib{quasi}{article}{,
    AUTHOR = {Smith, Jonathan D. H.},
     TITLE = {Quasigroups and quandles},
   JOURNAL = {Discrete Math.},
    VOLUME = {109},
      YEAR = {1992},
    NUMBER = {1-3},
     PAGES = {277--282},
      ISSN = {0012-365X,1872-681X},
  REVIEW = {\MR{1192389}},
       DOI = {10.1016/0012-365X(92)90297-S},
}

\bib{taGQ}{misc}{
      author={Ta, Lực},
       title={Graph quandles: {G}eneralized {C}ayley graphs of racks and right quasigroups},
        date={2025},
         url={https://arxiv.org/abs/2506.04437},
        note={Preprint, arXiv:2506.04437 [math.GT].},
}

\end{biblist}
\end{bibdiv}

\end{document}